\crefname{hypothesis}{Hypothesis}{Hypotheses}
\title{A Novel Randomized XR-Based Preconditioned CholeskyQR Algorithm}
\author{Yuwei Fan\thanks{Theory Lab, Huawei Hong Kong Research Center, Sha Tin, Hong Kong SAR, People’s Republic of China. (\email{fanyuwei2@huawei.com})}\and Yixiao Guo\thanks{LSEC, Institute of Computational Mathematics and Scientific/Engineering Computing, AMSS, Chinese Academy of Sciences. School of Mathematical Sciences, University of Chinese Academy of Sciences, Beijing 100049, People's Republic of China. (\email{guoyixiao@lsec.cc.ac.cn})} \and Ting Lin\thanks{School of Mathematical Sciences, Peking University, Beijing 100871, People's Republic of China. (\email{lintingsms@pku.edu.cn})}}
\DeclareMathOperator{\cond}{cond}
\newcommand{\cholqr}{\texttt{CholeskyQR}}
\newcommand{\chol}{\texttt{Cholesky}}
\newcommand{\alucholqr}{\texttt{rLU-CholeskyQR}}
\newcommand{\aqrcholqr}{\texttt{rQR-CholeskyQR}}
\newcommand{\lu}{\texttt{LU}}
\newcommand{\qr}{\texttt{QR}}
\newcommand{\R}{\mathbb{R}}
\renewcommand{\P}{\mathbb{P}}
\newcommand{\E}{\mathbb{E}}
\renewcommand{\u}{\textbf{u}}
\newcommand*{\addFileDependency}[1]{% argument=file name and extension
  \typeout{(#1)}% latexmk will find this if $recorder=0 (however, in that case, it will ignore #1 if it is a .aux or .pdf file etc and it exists! if it doesn't exist, it will appear in the list of dependents regardless)
  \@addtofilelist{#1}% if you want it to appear in \listfiles, not really necessary and latexmk doesn't use this
  \IfFileExists{#1}{}{\typeout{No file #1.}}% latexmk will find this message if #1 doesn't exist (yet)
}
\begin{document}

\maketitle

% REQUIRED
\begin{abstract}
CholeskyQR is a simple and fast QR decomposition via Cholesky decomposition, while it has been considered highly sensitive to the condition number.
In this paper, we provide a randomized preconditioner framework for CholeskyQR algorithm. Under this framework, two methods (randomized LU-CholeskyQR and randomized QR-CholeskyQR) are proposed and discussed. 
We prove the proposed preconditioners can effectively reduce the condition number, which is also demonstrated by numerical tests. 
Abundant numerical tests indicate our methods are more stable and faster than all the existing algorithms and have good scalability.
\end{abstract}

% REQUIRED
\begin{keywords}
 CholeskyQR, Preconditioner, Randomized numerical linear algebra.
\end{keywords}

% REQUIRED
\begin{AMS}
65F25, 65Y05.
\end{AMS}

% \linenumbers

\section{Introduction}
\label{sec:intro}

QR decomposition has laid the foundation of matrix computation, including (weighted) least square problem, block orthogonalization, Krylov subspace methods, randomized singular value decomposition. Based on its essential role in numerical linear algebra, it has shown great power in other areas like MIMO detection, GPS positioning, and so on. The (reduced) QR decomposition reads as 
\begin{equation}
\label{eq:qr}
A = QR,\quad A \in \R^{m\times n}, Q \in \R^{m\times n}, R \in \R^{n\times n}, ~~(m>n)
\end{equation}
where $Q$ is an orthogonal matrix, and $R$ is an upper-triangular matrix. 

Therefore, how to perform QR decomposition quickly and decently has been a great challenge for computer scientists and mathematicians over decades. 
On a modest scale, Householder reflection \cite{householder1958unitary} and Givens rotation are preferred, as they are proven stable, see \cite{golub2013matrix} and \cite[Chapter 18]{higham2002accuracy} for details. 
However, several drawbacks of both methods (mainly Householder QR) have been mentioned in the literature. First, these methods are BLAS1 and BLAS2 intensive, which will be inferior for the machine supporting fast BLAS3 operation. 
Second, such communication-frequent methods might {be poorly performed} for distributed architectures. 
Many studies focus on how to improve Householder QR via BLAS3 and distributed systems, such as \cite{bischof1987wy,bischof1991parallel,granat2010novel,o1990parallel,schreiber1989storage}. 
In contrast, communication-avoiding algorithms \cite{ballard2011minimizing,demmel2013communication} have attracted much attention for their asymptotically optimal communication cost. 
We will not expand this topic, and the interested readers might refer to the references.

In this paper, we focus on the QR decomposition of tall-and-skinny matrices, \textit{i.e.}, the case when $m\gg n$ in \eqref{eq:qr}. 
Such requests could come from randomized singular value decomposition (RSVD) \cite{halko2011finding}, Krylov subspace methods (KSM) \cite{hoemmen2010communication} and local optimal block preconditioned conjugate gradient method (LOBPCG) \cite{duersch2018robust}, as well as block HouseholderQR algorithms \cite{schreiber1989storage}. 
Due to its importance, many efforts have been put in past decades. For direct methods, Tall and Skinny QR (TSQR) was proposed and well studied in the past ten years, see \cite{anderson2011communication,constantine2011tall}. 
However, these improvements are far from satisfactory, mainly due to the complexity of implementation.

On contrast, the CholeskyQR method is famous as an efficient and elegant method for tall-and-skinny matrices, whose history can date back to a century ago. It uses the fact that $R$ factor of $A$ is just the Cholesky factor of $A^TA$. 
Different from Givens and Householder method, the CholeskyQR method requires less communication and computation cost, especially for tall-and-skinny matrices. 
However, the condition number of $A^TA$ is $\cond(A)^2$, which makes the CholeskyQR method suffer from numerical instability, as commented in \cite{fukaya2014choleskyqr2}. 
Hence it is necessary to precondition the CholeskyQR algorithm when $A$ is ill-conditioned. Regarding this, how to choose a suitable preconditioner becomes the main problem of this type of algorithms. 
Here we present a short review of existing work on preconditioning CholeskyQR, and  \cref{sec:algs} expands our discussions. The authors in \cite{fukaya2014choleskyqr2} provide a first and clever way of preconditioning the CholeskyQR algorithm by just performing CholeskyQR again, named CholeskyQR2. In their consequent paper \cite{yamamoto2015roundoff}, rigorous round-off error has been established. However, when $\cond(A) \sim \u^{-1/2}$\footnote{In this paper, $\u$ denotes the machine precision.}, the first Cholesky factorization might break down, leading to an unwanted cessation of CholeskyQR2. To remedy this issue, a shifted CholeskyQR \cite{fukaya2020shifted} is adopted as the preconditioner. More recently, the full LU decomposition is considered as the preconditioner in \cite{terao2020lu}.

Unlike existing deterministic preconditioners, we explore the possibility of randomized ones. It is consensus that modern large-scale computation is eager for randomized algorithms, which have reached tremendous success in linear solvers, range finder, model reduction, \textit{etc.} See \cite{kannan2017randomized,martinsson2020randomized} for more details.

In this paper, we propose a novel framework for preconditioning the CholeskyQR algorithm. The proposed {framework} has the following advantages. First, the method requires less computation and communication costs, comparing to existing preconditioners like the CholeskyQR2 family. Second, the matrix concentration techniques allow us to predict whether the whole system is rank deficient with high probability. If the system is rank deficient, then only direct QR methods could work. Thus in application aspect, our proposed methods are more flexible. Finally, it shares the same merit as other CholeskyQR algorithms: easy to implement. We believe these advantages benefit from randomization techniques, which are powerful and unexplored.

The rest of this paper is organized as follows. In \cref{sec:algs} we recall the definition and historical progress of the CholeskyQR algorithm. Some principles on how to precondition CholeskyQR algorithms are pointed out. Based on these, we propose \emph{randomized XR based CholeskyQR method}, including \emph{randomized LU-CholeskyQR} and \emph{randomized QR-CholeskyQR}. The theoretical analysis is displayed in \cref{sec:proof}. To further corroborate our methods, numerous numerical tests are carried out in 
\cref{sec:experiments}. Two applications are shown in \cref{sec:Applications}, describing how our method can accelerate related algorithms like randomized SVD.

\section{CholeskyQR and its preconditioners}
\label{sec:algs}
This section introduces the family of CholeskyQR algorithms. Hereafter, we assume that $A \in \R^{m\times n}$ is full rank and tall-and-skinny, which means $m \gg n$. We also assume that $l\ge n$ is an integer. 

\subsection{Existing methods review}
The original CholeskyQR is shown in \cref{algs:chol-qr}. It first computes the Grammian matrix of $A$, denoted as $G$, then selects $R$ as the (upper) Cholesky factor. Notice that if no numerical issue happens, the $R$ is exactly the $R$ factor of $A$.\footnote{Since if $A = \tilde Q \tilde R$ is the (reduced) QR decomposition, then the orthogonality of $Q$ yields that $A^TA = \tilde R^T \tilde R$. It follows from the definition of Cholesky decomposition that $R = \tilde R$.} Finally, to recover $Q$, we need to solve a triangular system additionally. 

\begin{algorithm}[htbp]
\caption{$[Q,R]$ = \cholqr($A$)}
\label{algs:chol-qr}
\begin{algorithmic}[1]
\STATE $G = A^TA$.
\STATE $R = \chol(G)$.
\STATE $Q = AR^{-1}$.
\end{algorithmic}
\end{algorithm}

Compared to the classical HouseholderQR and TSQR, the CholeskyQR algorithm has several highlight points. 

\begin{itemize}
    \item First, its computational cost is about half of TSQR and HouseholderQR. 
    \item Second, for the parallel environment, CholeskyQR only needs simple reduction operations, while TSQR and other versions of parallelized QR need a huge amount of reduction operations. 
    \item The CholeskyQR algorithm uses BLAS3 operations, which Housholder QR and TSQR can hardly apply.
\end{itemize}

As a result, CholeskyQR always runs faster than classical QR and TSQR. However, CholeskyQR is prohibitive in real scenarios, and there are several reasons displayed in the literature. 

\begin{itemize}
    \item First, the numerical result seems rather sensitive to the condition number of the matrix. {It was shown in \cite{yamamoto2015roundoff} that the round-off error of CholeskyQR is about $\cond(A)^2\u$.} 
    \item Second, the CholeskyQR method needs to compute the Cholesky factorization of $A^TA$, while the condition number of the latter is $\cond(A)^2$. When $A$ has a large condition number such that $\cond(A^TA)$ is larger than $\u^{-1}$, then the CholeskyQR algorithm will break down at the second step. 
\end{itemize}

CholeskyQR2, proposed in \cite{fukaya2014choleskyqr2}, aimed to improve numerical stability by repeating twice CholeskyQR, see \cref{algs:cholqr2}. In the paper and its sequential work \cite{yamamoto2015roundoff,yamamoto2016roundoff}, CholeskyQR2 has been established in both theory and application. An obvious advantage is that CholeskyQR2 is simple and easy to implement, with acceptable numerical error, see \cite{yamamoto2016roundoff}: Suppose $\hat Z$ is the numerical result of CholeskyQR2, then we have 
\begin{equation}
    \|\hat Z^T \hat Z - I\|_{F} \le 6(m+n+1)n\u ,
\end{equation}
under mild condition.
However, when $\cond(A) = O(\u^{-1/2})$, the CholeskyQR2 also breaks down like the original one. 

\begin{algorithm}[htbp]
\caption{$[Q,R]$ = \cholqr2($A$)}
\label{algs:cholqr2}
\begin{algorithmic}[1]
\STATE $[Q_1,R_1]$ = \cholqr($A$).
\STATE $[Q,R_2]$ = \cholqr($Q_1$).
\STATE $R = R_2R_1$. \footnotemark
\end{algorithmic}
\end{algorithm}

\footnotetext{For orthogonalization, we might only interested in $Q$ factor. Hence there is no need to reconstruct $R$ factor. We refer this to \emph{$R$-less} QR factorization.}

To this end, shifted CholeskyQR3 (\cref{algs:shiftcholqr3}) was proposed to remedy the problem that the first Cholesky step might break down. In \cite{fukaya2020shifted}, the authors proved that the condition number after first Cholesky step can be reduced to 
\begin{equation}
    \cond(\hat{Q}) \le 2\sqrt{1+\frac{\varepsilon}{\|X\|_2^2}(\cond(X)^2)}\sqrt{3},
\end{equation}
under mild condition and suggested that $\varepsilon = 11(mn+n(n+1))\u \|X\|_2^2$ is enough for performance. Often, the {numerical behavior of shifted CholeskyQR} is poor and additional CholeskyQR step needs to be performed twicely. That's why the algorithm is called shifted CholeskyQR3.

\begin{algorithm}[htbp]
\caption{$[Q,R]$ = \texttt{sCholeskyQR3}($A$)}
\label{algs:shiftcholqr3}
\begin{algorithmic}[1]
\STATE $G = A^TA$.
\STATE $R_1 = \chol(G + \epsilon I )$.
\STATE $Q_1 = AR_1^{-1}$
\STATE $[Q_2,R_2]$ = \cholqr($Q_1$).
\STATE $[Q,R_3]$ = \cholqr($Q_2$).
\STATE $R = R_3R_2R_1$
\end{algorithmic}
\end{algorithm}

We comment on these existing methods as a summary, which also inspire our proposed algorithms. Both CholeskyQR2 and shifted CholeskyQR3 can be regarded as  a preconditioner of CholeskyQR, as pointed out in \cite{fukaya2020shifted}. However, there are also some departures from the classical preconditioners. First, the preconditioning techniques are by their nature inexact, like diagonal scaling or incomplete LU decomposition. The might not be true for the CholeskyQR2 algorithm:  the first Cholesky step will recover exact QR decomposition if no numerical round-off error appears. Second, the preconditioning seems costly, due to the numerical instability of the original CholeskyQR: The algorithm needs repeating twice or even more to get a satisfactory result. For these two reasons, those preconditioners are not so familiar with commonly discussed ones.

\subsection{XR-Based preconditioned CholeskyQR}
The purpose of this subsection is to develop new methods for preconditioning the CholeskyQR algorithm. Although CholeskyQR2 and shifted CholeskyQR3 are not so efficient, they still provide an alternative way of viewing the CholeskyQR algorithm. Usually, the preconditioned CholeskyQR algorithm can be regarded as a two-step framework.

\begin{algorithm}[htbp]
\label{algs:framework}
\caption{Abstract Framework of Preconditioned CholeskyQR}
\begin{algorithmic}[1]
    \STATE Precondition Step: A Rough QR decomposition $A = X \tilde{R}$. 
    \STATE CholeskyQR Step: $[Q,\hat{R}] = \cholqr(X)$.
    \STATE Recover $R = \hat{R}\tilde{R}$.
\end{algorithmic}
\end{algorithm}

For the CholeskyQR2 algorithm, the preconditioning step is just CholeskyQR itself, which is accurate but less efficient, as mentioned in the previous subsection. The core of such an efficient and stable algorithm is finding a suitable and practical rough QR decomposition. It should meet the following requirements. First, a fast rough QR decomposition is required in this framework. Second, the $R$ factor of rough QR decomposition must be close to the exact $R$ factor and must be upper-triangular. Equivalently, the $Q$ factor of rough QR decomposition must enjoy a mild condition number. Notice that $Q$-less QR decomposition can be used in the preconditioning step since only the $R$ factor is required.

It is necessary to achieve good performance to make $\cond(\tilde X)$ smaller. Ideally, the best rough QR decomposition must be the exact QR decomposition, like what we encounter in the CholeskyQR2 family, regardless of round-off error. 
However, such a step is unaffordable since it achieves our ultimate goal, and no stable QR decomposition can be applied for large-scale matrices.

Therefore, we attempt to perform XR decomposition, in a submatrix $A_1 \in \R^{l\times n}$. The submatrix extracts the core information of the original matrix. Denote by $P$ the projection operator, such that $A_1 = PA$. Then we assume that $\tilde R$ is the $R$ factor of $PA$, and $X:= A\tilde R^{-1}$. Therefore, XR decomposition fits into the framework in \cref{algs:framework}. In this paper,
XR might be LU or QR, to make sure that $R$ is preserved upper-triangular. Plus, $l\ge n$ is assumed to keep the correct size of $R$. For this purpose, we propose the following two algorithms, attempting to use randomized strategy to reduce the condition number of the CholeksyQR algorithm. 

More precisely, our proposed method includes two steps: We first choose a submatrix $A_1$, then perform {LU or QR decomposition to get $R$, the upper-triangular factor of $A_1$}. If $R$ is invertible, then $R^{-1}$ serves as the preconditioner: we calculate $X = AR^{-1}$, and the final $Q = \cholqr(X)$ is our desired $Q$ factor. These two algorithms are called randomized LU-CholeskyQR and randomized QR-CholeskyQR algorithms, respectively. Detailed algorithms are displayed in \cref{alg:rlu-CholeskyQR,alg:rqr-CholeskyQR}.

\begin{algorithm}[htbp]
\caption{$[Q,R]$ = \alucholqr($A$)}
\label{alg:rlu-CholeskyQR}
\begin{algorithmic}[1]
\STATE Choose $A_1$ as a submatrix of $A$, with size $l\times n$.
\STATE $[L,U] = \lu(A_1)$.
\STATE $X = AU^{-1}$.
\STATE $[Q,R_2]$ = \cholqr($X$).
\STATE $R = R_2U$.
\end{algorithmic}
\end{algorithm}

\begin{algorithm}[htbp]
\caption{$[Q,R]$ = \aqrcholqr($A$)}
\label{alg:rqr-CholeskyQR}
\begin{algorithmic}[1]
\STATE Choose $A_1$ as a submatrix of $A$, with size $l\times n$.
\STATE $[\sim,R_1] = \qr(A_1)$.\footnotemark
\STATE $X = AR_1^{-1}$. 
\STATE $[Q,R_2]$ = \cholqr($X$).
\STATE $R = R_2R_1$.
\end{algorithmic}
\end{algorithm}

\footnotetext{$[\sim,R]$ represents a Q-less QR decomposition algorithm.}

We make some remarks on the choice of XR decomposition. We always prefer QR decomposition, in the sense that when $l = n$, the rough QR decomposition becomes exact. However, even the exact LU decomposition cannot ensure that either $L$ or $U$ factor is better-conditioned than $LU$ itself, and such attempt is considered in \cite{terao2020lu}.

The proposed framework has several advantages. First, the partial XR factorization is cheap and easy to implement, and the computational cost is always $O(ln^2)$. Later we will show that it is sufficient to choose $l$ slightly larger than $n$, yielding the computational cost of rough QR decomposition approximately $O(n^3)$. Second, in general, the partial $R$ factor is a suitable preconditioner, and the corresponding $Q$ factor behaves well-conditioned, similarly to CholeskyQR2 and the shifted CholeskyQR3.

Compared to the CholeskyQR2 family, our proposed preconditioners are more similar to the classical ones in the following two senses:
\begin{itemize}
    \item The preconditioning step is inexact, even if no round-off error is involved.
    \item The preconditioning step is much cheaper than the CholeskyQR step.
\end{itemize}

It is essential in our methods that sacrificing the accuracy in the preconditioning step gains much flexibility. That's why we prefer randomized algorithms to deterministic ones.

However, it is still a CholeskyQR method, and some common drawbacks seem inevitable currently. The success of our algorithm requires the full rank assumption of the original matrix. Moreover, the algorithm might break down when $R_1$ is degenerate even if $A$ is of full rank due to bad sampling. Luckily, the most scenario we have considered will not reach the worst case, for example, randomized SVD, see \cref{sec:experiments,sec:Applications}.

\subsection{Strategies of choosing submatrix}
\label{sec:algs:submatrix}

This subsection deals with how to choose the submatrix $A_1$. We describe several strategies, which are all prevailing and well-studied in the past years. This choice is also called random projection in the literature, see \cite{gittens2013topics} and references therein for further study. We here summarize two common techniques.
We introduce the unified expression $A_1 = PA$, where $P \in \R^{l \times m}$ will be explained case by case.

\paragraph{Row Extraction}
Row extraction generates the submatrices from randomly choosing $l$ rows from original $m$ rows directly. Either equi-probability or a priori weighted sampling can be used. In this case, $P$ is a row-chosen matrix. The advantage of row extraction is easy to implement and requires less communication time and cost in parallel. The disadvantage is that it will provide poor preconditioning in some extreme cases, as pointed out in the later analysis. However, these extreme cases encounter rarely in most practical cases. Hence we adopt this strategy in \cref{sec:experiments} for simplicity.

\paragraph{Gaussian Ensemble}
Gaussian Ensemble generates the submatrices from $A_1 = \Omega A$, where $\Omega \in \R^{l \times m}$ is a Gaussian matrices (that means each entry of $\Omega$ is drawn from $N(0,1)$ i.i.d.) This method enjoys the optimal theoretical result, but the multiplication operation is unaffordable. {Hence we only treat it as a theoretical supplement for our proposed methods.}

\subsection{Parallelism}
This subsection discusses how to perform the proposed algorithms in parallel, typically in an MPI environment. For convenience, we use terminology in collective communication from MPI functions directly, such as \texttt{scatter}, \texttt{broadcast}, \texttt{reduce}, \texttt{gather}. Since the Cholesky decomposition is always assumed to be performed in a single processor, it suffices to discuss computing $A^TA$, \textit{i.e.} forming the Grammian matrix, and compute $AR^{-1}$ in parallel environment.

\begin{itemize}
\item Forming global Grammian matrices is divided into two steps: 1) Each processor computes its local Grammian matrix $A_{loc}^TA_{loc}$. 2) Use \texttt{reduce} to summing all local Grammian matrix up, in root processor. 

\item Solving the upper-triangular systems also includes two steps: 1) The root processor sends $R$ to each processor using \texttt{broadcast}. 2) Each processor solves the upper-triangular systems individually.

\end{itemize}

\subsection{Complexity analysis}
\label{sec:overhead}
In this subsection, we study the computation and communication costs of our algorithms. We compare our algorithms with other existing methods, like HouseholderQR, CholeskyQR2 family, and TSQR. We suppose that $A \in \mathbb{R}^{m \times n}$, $A_1 \in \mathbb{R}^{l \times n}$, and $p$ be the number of processes. In complexity analysis, the low order term $O(m^2+n^2+l^2)$ will be omitted.

\subsubsection{Computational costs}
The analysis consists of two parts, the analysis of CholeskyQR and randomized QR.

\paragraph{CholeskyQR} We first evaluate the complexity of computing Grammian matrix in parallel. Each processor first computes the local Grammian matrix, which needs $n^2m/p$ operations per processor. Then we perform Cholesky decomposition in root node, which needs $\frac{1}{3}n^3 + l.o.t.$ ops (only root node). Finally, solving a triangular system needs $C_{\texttt{tri}} = n^2m/p$ ops per processor. Hence $C_{\cholqr} = 2n^2m+\frac{1}{3}n^3$.

\paragraph{Precondition Step \cite{golub2013matrix}} For LU factorization, the cost is $C_{\texttt{LU}} = n^2(l-\frac{1}{3}n)$, and for QR factorization, we have $C_{\texttt{QR}} = n^2(4l - \frac{4}{3}n)$.

\paragraph{Summary} The analysis result is summarized in the table below. We should keep in mind that $m \gg l > n$, and terms involving $m$ will be dominant. Notice that this table only discusses the cost about computing $Q$ factor, and we do not discuss forming $R$ factor by matrix multiplication.

\begin{table}[htbp]
	\centering
	\begin{tabular}{|c|c|c|}
	\hline
					& Total                         & Along Critical Path           \\ \hline
	HouseholderQR   & $n^2(4m - \frac{4}{3}n)$        &    --                         \\ \hline
	CholeskyQR      & $n^2(2m+\frac{1}{3}n)$        & $n^2(2m/p+\frac{1}{3}n)$      \\ \hline
	CholeskyQR2     & $n^2(4m+\frac{2}{3}n)$        & $n^2(4m/p+\frac{2}{3}n)$      \\ \hline
	sCholeskyQR3    & $n^2(6m+n)$                   & $n^2(6m/p+n)$                 \\ \hline
	% rLU-CholeskyQR  & $3n^2m + 4n^2l -n^3$          & $3n^2m/p + 4n^2l - n^3$       \\ \hline
	rLU-CholeskyQR  & $n^2(3m + l)$          & $n^2(3m/p + l) $       \\ \hline
	rQR-CholeskyQR  & $n^2(3m + 4l -n)$          & $n^2(3m/p + 4l -n)$       \\ \hline
	\end{tabular}

% \begin{table}[htbp]
% \centering
% \begin{tabular}{|c|c|c|}
% \hline
%                 & Total                         & Along Critical Path           \\ \hline
% HouseholderQR   & $4n^2m-\frac{4}{3}n^3$        &    --                         \\ \hline
% CholeskyQR      & $2n^2m+\frac{1}{3}n^3$        & $2n^2m/p+\frac{1}{3}n^3$      \\ \hline
% CholeskyQR2     & $4n^2m+\frac{2}{3}n^3$        & $4n^2m/p+\frac{2}{3}n^3$      \\ \hline
% sCholeskyQR3    & $6n^2m+n^3$                   & $6n^2m/p+n^3$                 \\ \hline
% rLU-CholeskyQR  & $3n^2m + 4n^2l -n^3$          & $3n^2m/p + 4n^2l - n^3$       \\ \hline
% \gyx{rLU-CholeskyQR}  & $3n^2m + n^2l$          & $3n^2m/p + n^2l $       \\ \hline
% rQR-CholeskyQR  & $3n^2m + 4n^2l -n^3$          & $3n^2m/p + 4n^2l - n^3$       \\ \hline
% \end{tabular}
\caption{Computational Cost Analysis (l.o.t omitted)}
\end{table}

%\paragraph{Comparing to CholeskyQR2} When $4l < m + \frac{1}{3}n$, the total number arithmetic operations of \texttt{aqr-chol-qr} is smaller than \texttt{chol-qr2}. The same result holds true when along the critical path, provided $4l < m/p + \frac{1}{3}n$.

\subsubsection{Communication costs}
In this subsection, we compare the communication costs simply by two indices: the communication times and the total size of communicated data. We omit the comparison with TSQR since we only consider collaborative communication.

\begin{table}[htbp]

\centering
\begin{tabular}{|c|c|c|}
\hline
               & \# Times                           & \# Data               \\ \hline

CholeskyQR        &  2     & $pn^2   $      \\ \hline
CholeskyQR2       & 4          &  $2pn^2$        \\ \hline
sCholeskyQR3      & 6           & $3pn^2$                 \\  \hline
rLU-CholeskyQR    &    $3\sim 4$           &   $ \frac{3}{2}pn^2\sim \frac{3}{2}pn^2 + nl$                          \\ \hline
rQR-CholeskyQR    & $3\sim 4$ & $\frac{3}{2}pn^2 \sim \frac{3}{2}pn^2 + nl$ \\ \hline
\end{tabular}
\caption{Communicatoin Cost Analysis (l.o.t omitted)}
\end{table}

\section{Theoretical analysis}
\label{sec:proof}
In this section, we provide some theoretical analysis on {randomized QR-CholeskyQR algorithm}. The main tool used in this section is concentration inequalities and related topics, and the readers not familiar with those might refer to \cite{vershynin2018high,wainwright2019high} for a comprehensive study. For ease of understanding, we omit the round-off error in this section. Essentially, there seems no need to estimate the round-off error provided both small QR and triangular solver are stable. However, for completeness, we give an analysis in \cref{app:round-off-error}.

 We first simplify the problem via a matrix analysis approach. We first recall some basic definitions and properties of orthogonal matrices and the condition number. Here, we denote by $\|\cdot\|$ the  $\ell^2$ norm of a vector. Throughout this section, $Q \in \mathbb{R}^{m\times n}$ is called an orthogonal matrix if $Q^TQ = I_n$, where $I_n \in \mathbb{R}^{n \times n}$ is the identity matrix. Notice that this implies $m \ge n$. The condition number is under $\ell^2$ norm, that is

\begin{equation} \cond(M) = \frac{\sigma_{\max}(M)}{\sigma_{\min}(M)} ,\end{equation}
where $\sigma_{\max}$ and $\sigma_{\min}$ are the largest and smallest singular value of matrix $A$. A consequent result is that if $Q$ is an orthogonal matrix, then $\cond(QM) = \cond(M)$, since multiplying an orthogonal matrix does not change the singular value. For QR decomposition $A = QR$, we have $\cond(A) = \cond(R)$.

Suppose $A {\in \mathbb{R}^{m\times n}}$ is full rank, with QR decomposition $A = QR$, where $Q \in \mathbb{R}^{m\times n}$ is orthogonal and $R \in \mathbb{R}^{n \times n}$ is upper-triangular. Suppose $P$ is the row-chosen matrix such that the submatrix $A_1$ can be expressed as $A_1 = PA$, see \cref{sec:algs:submatrix}. Our main result starts from the following proposition, 

\begin{proposition}
    \label{prop:cond-number-identity}
Suppose that $R_1$ is the $R$ factor of $A_1 = PA$, and $X:=AR_1^{-1}$. Moreover, suppose that $A = QR$ is its QR decomposition, then we have
 \begin{equation}\cond(X) = \cond(PQ),\end{equation}
 provided $A$ and $A_1$ are full rank.
\end{proposition}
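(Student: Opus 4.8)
The plan is to reduce everything to two invariance properties already recorded before the statement: that left-multiplication by an orthogonal matrix leaves all singular values (hence $\cond$) unchanged, and that a QR decomposition satisfies $\cond(A)=\cond(R)$. The algebraic engine is that the invertible upper-triangular matrices form a multiplicative group, so products and inverses of upper-triangular matrices are again upper-triangular. This will let me identify the relevant $R$ factors without any explicit computation.

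First I would use the full rank of $A$ to fix $A=QR$ with $R$ invertible, and note that $A_1=PA=PQR$. The key move is to relate $R_1$, the $R$ factor of $A_1$, to the $R$ factor of $PQ$. Setting $B:=PQ=A_1R^{-1}$, the full rank of $A_1$ and of $R^{-1}$ gives that $B$ has rank $n$ and that $R_1$ is invertible. Writing the QR decomposition $A_1=Q_1R_1$, I obtain $B=Q_1(R_1R^{-1})$, where $R_1R^{-1}$ is upper-triangular; this exhibits a valid QR decomposition of $B=PQ$, so $R_1R^{-1}$ is an $R$ factor of $PQ$ and therefore $\cond(PQ)=\cond(R_1R^{-1})$.

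Next I would compute $\cond(X)$ directly. Since $X=AR_1^{-1}=Q(RR_1^{-1})$ and $Q$ is orthogonal, $\cond(X)=\cond(RR_1^{-1})$. But $RR_1^{-1}=(R_1R^{-1})^{-1}$, and inverting a matrix merely reciprocates its singular values, so $\cond(RR_1^{-1})=\cond(R_1R^{-1})$. Chaining this with the previous paragraph yields $\cond(X)=\cond(R_1R^{-1})=\cond(PQ)$, which is exactly the claim.

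The only genuinely delicate point I foresee is the non-uniqueness of the QR decomposition: $R$ and $R_1$ are each determined only up to left multiplication by a diagonal sign matrix, so speaking of \emph{the} $R$ factor is an abuse unless the ambiguity is shown to be harmless. This is where I would spend the most care, but it resolves cleanly, since any such sign factor (indeed any square orthogonal factor) can be absorbed and, by the same singular-value invariance used throughout, changes none of the condition numbers appearing above. I would also state explicitly that the full-rank hypotheses on $A$ and $A_1$ are precisely what make $R$, $R_1$, and $PQ$ invertible (respectively full rank), so that every inverse written down is legitimate.
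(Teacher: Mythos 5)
Your argument is correct and is essentially the paper's own proof in a slightly reorganized form: the paper introduces $PQ=Q_2R_2$ and invokes uniqueness of the QR decomposition to get $R_1=R_2R$, whereas you build the QR factorization of $PQ$ directly as $Q_1(R_1R^{-1})$, so your $R_1R^{-1}$ is exactly the paper's $R_2$ and the remaining steps (orthogonal invariance of $\cond$ and $\cond(M^{-1})=\cond(M)$) coincide. Your closing remark on the sign ambiguity of the $R$ factor is a reasonable extra precaution that the paper sidesteps by simply asserting uniqueness, and it does not affect correctness.
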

\begin{proof}

Since $A_1 = PA = PQR$, $A_1$ is full rank is equivalent to $PQ$ is full rank.  Define  $\hat Q_1: = A_1R_1^{-1}$ is $Q$-factor of $A_1$, by definition it is an orthogonal matrix. Suppose $PQ = Q_2R_2$ is the QR decomposition of $PQ$.
Since $A = QR$, we have 
\begin{equation}\hat{Q}_1R_1 = A_1 = PA = PQR = Q_2R_2R.\end{equation} Observe that by assumption we have $\hat{Q}_1, R_1, Q_2, R$ are all invertible. It follows from the uniqueness of QR decomposition that $R_1 = R_2R$. Hence \begin{equation}X = AR_1^{-1} = QRR_1^{-1} = QR_2^{-1}.\end{equation} Since $Q$ is orthogonal matrix, we have
\begin{equation}\cond(X) = \cond(R_2^{-1}) = \cond(R_2) = \cond(PQ).\end{equation}
\end{proof}

% Now we are ready to estimate the condition number of $\cond(X) = \cond(PQ)$. 
Let $\lambda_{\max}(X)$ and $\lambda_{\min}(X)$ be the largest and smallest eigenvalue of a matrix $X$, notice that they coincide with $\sigma_{\max}(X)$ and $\sigma_{\min}(X)$ when $X$ is symmetric and semi-positive definite. Recall the matrix Chernoff inequality from \cite[Theorem 5.1.1]{tropp2015introduction}:
\begin{proposition}[Matrix Chernoff Inequality]
\label{prop:chernoff}
Suppose $X_1,\cdots, X_s  \in \mathbb{R}^{n\times n}$ are $s$ independent, symmetric, random matrices with dimension $n$.\footnote{The original statement is for Hermitian matrices.} Suppose that 
$0 \le \lambda_{\min}(X_k) \le \lambda_{\max}(X_k) \le L$ holds for all $k$. 

Define $Y = \sum_{k=1}^s X_k$ be their sum, and denote by $\mu_{\min} := \lambda_{\min}(\E Y), \mu_{\max} := \lambda_{\max}(\E Y)$. Then for any $\varepsilon >0$ we have
\begin{equation}
    \P\bigg[\lambda_{\min}(Y) \le (1-\varepsilon)\mu_{\min} \bigg] \le n\left[\frac{e^{-\varepsilon}}{(1-\varepsilon)^{1-\varepsilon}}\right]^{\mu_{\min}/L} 
    \end{equation}
and 
\begin{equation}
    \P\bigg[\lambda_{\max}(Y) \ge (1+\varepsilon)\mu_{\max}\bigg] \le n\left[\frac{e^{\varepsilon}}{(1+\varepsilon)^{1+\varepsilon}}\right]^{\mu_{\max}/L}.
    \end{equation}
\end{proposition}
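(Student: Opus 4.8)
The plan is to follow the matrix Laplace transform method, the noncommutative analogue of the classical Chernoff bounding argument. I would prove the upper tail for $\lambda_{\max}(Y)$ in detail; the lower tail bound for $\lambda_{\min}(Y)$ then follows by applying the identical argument to the matrices $-X_1,\dots,-X_s$ and using the identity $\lambda_{\min}(Y) = -\lambda_{\max}(-Y)$ together with $\lambda_{\min}(\E Y) = -\lambda_{\max}(-\E Y)$.

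First I would reduce the tail probability to a moment generating function estimate. For any $\theta > 0$, the event $\{\lambda_{\max}(Y) \ge t\}$ coincides with $\{\lambda_{\max}(e^{\theta Y}) \ge e^{\theta t}\}$, and since $e^{\theta Y}$ is positive definite its largest eigenvalue is dominated by its trace. Markov's inequality then yields
\[
\P\big[\lambda_{\max}(Y) \ge t\big] \le e^{-\theta t}\, \E \operatorname{tr} e^{\theta Y}.
\]

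The heart of the proof is controlling the matrix MGF $\E \operatorname{tr} e^{\theta Y}$. In the scalar case independence factorizes the MGF of a sum, but here the $X_k$ need not commute, so I would instead invoke the subadditivity of the matrix cumulant generating function,
\[
\E \operatorname{tr} e^{\theta Y} \le \operatorname{tr} \exp\!\Big(\sum_{k=1}^s \log \E\, e^{\theta X_k}\Big),
\]
whose justification rests on Lieb's concavity theorem for the map $A \mapsto \operatorname{tr}\exp(H + \log A)$ combined with Jensen's inequality. Next I would bound each summand: on the interval $[0,L]$ the convexity of $x \mapsto e^{\theta x}$ gives $e^{\theta x} \le 1 + \frac{e^{\theta L}-1}{L}\,x$, which under the spectral hypothesis $0 \le \lambda_{\min}(X_k) \le \lambda_{\max}(X_k) \le L$ transfers to the semidefinite bound $\E\, e^{\theta X_k} \preceq I + \frac{e^{\theta L}-1}{L}\,\E X_k$. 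Using operator monotonicity of $\log$ and $\log(I+A) \preceq A$ gives $\log \E\, e^{\theta X_k} \preceq \frac{e^{\theta L}-1}{L}\,\E X_k$. Summing over $k$, applying monotonicity of the trace exponential and the elementary bound $\operatorname{tr}\exp(\alpha M) \le n\,e^{\alpha \lambda_{\max}(M)}$, I arrive at
\[
\E \operatorname{tr} e^{\theta Y} \le n\,\exp\!\Big(\frac{e^{\theta L}-1}{L}\,\mu_{\max}\Big).
\]

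Finally I would optimize the free parameter. Substituting $t = (1+\varepsilon)\mu_{\max}$ and minimizing $e^{-\theta t}\,n\exp\!\big(\tfrac{e^{\theta L}-1}{L}\mu_{\max}\big)$ over $\theta > 0$ leads to the choice $\theta = \frac{1}{L}\log(1+\varepsilon)$, which reproduces exactly the stated right-hand side. I expect the genuine obstacle to be the third step: noncommutativity defeats the elementary factorization that drives the scalar Chernoff bound, and the subadditivity inequality for the matrix cumulant generating function relies essentially on Lieb's concavity theorem. Once that tool is granted, the remaining steps are a routine translation of the scalar argument, with operator monotonicity of $\log$ and of the trace exponential playing the role of ordinary monotonicity.
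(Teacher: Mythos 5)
This proposition is not proved in the paper at all: it is quoted verbatim (up to restriction from Hermitian to real symmetric matrices) from Tropp's monograph, cited as \cite[Theorem 5.1.1]{tropp2015introduction}. Your sketch is precisely the standard matrix Laplace transform argument used there --- trace-Markov bound, subadditivity of the matrix cumulant generating function via Lieb's concavity theorem, the chord bound $e^{\theta x}\le 1+\frac{e^{\theta L}-1}{L}x$ on $[0,L]$, and optimization at $\theta=\frac{1}{L}\log(1+\varepsilon)$ --- so it is correct in outline and coincides with the proof in the cited source rather than offering an alternative. One small caveat: the lower tail does not literally follow by ``applying the identical argument to $-X_k$,'' since $-X_k$ violates the hypothesis $0\le\lambda_{\min}$; the clean route is to run the same Laplace transform with $\E\operatorname{tr}e^{-\theta Y}$ for $\theta>0$, where the chord bound produces the negative coefficient $\frac{e^{-\theta L}-1}{L}$ and hence $\lambda_{\max}$ of the sum turns into $\mu_{\min}$. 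This is a routine fix, not a gap in the method.
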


Based on this proposition, we show our first result, giving an estimate on the condition number of $X$ when each row is chosen with equal probability. Before we go to the details of the statement and proof of this theoretical result, let us consider some extreme cases: suppose $A = \begin{bmatrix}I_{n}\\ \mathbf{0}_{(m-n)\times n}\end{bmatrix}$, then the submatrix will be considerably ill-conditioned, since indeed we need to choose each of the first $n$ rows of $A$ at least one time, in our row choosing strategy. This only occurs in a probability about 
\[\binom{m - n}{l - n}/\binom{m}{l} = \frac{(l)(l-1)\cdots(l-n)}{(m)(m-1)\cdots(m-n)} \approx (\frac{l}{m})^n \ll 1.\]

Therefore, the behavior of our method is strongly related to the structure of the original matrix $A$. To characterize it more precisely, we introduce $\theta(Q) = \max_{i}(\|Q(i,:)\|_2^2)$\footnote{Here we adopt the MATLAB notation. For example, $Q(i,:)$ denotes the $i$-th row of $Q$.}, aiming to measure how the elements of a matrix concentrate.

\begin{theorem}[Equal Probability Case]
    \label{thm:equal-prob-case}
Suppose $A \in \mathbb{R}^{m \times n}$ with QR decomposition $A = QR$, the submatrix is generated by choosing $l$ row independently, while each row of submatrix is drawn from the original matrix $A$ with equal probability $ = \frac{1}{m}$. 
Then for any $\delta > 0, \varepsilon \in (0,1)$, we have 
\begin{equation}
    \P\left[ \cond(X) \ge \sqrt{\frac{1+\delta}{1-\varepsilon}}\right] \le n\left[\left[\frac{e^{-\varepsilon}}{(1-\varepsilon)^{1-\varepsilon}}\right]^{l/(m\theta(Q))} +\left[\frac{e^{\delta}}{(1+\delta)^{1+\delta}}\right]^{l/(m\theta(Q))} \right],
\end{equation}
where $X$ is defined in \cref{alg:rqr-CholeskyQR}.
\end{theorem}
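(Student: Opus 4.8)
The plan is to combine \cref{prop:cond-number-identity} with the matrix Chernoff inequality (\cref{prop:chernoff}). By the proposition, $\cond(X) = \cond(PQ)$, so it suffices to control the extreme singular values of $PQ$, equivalently the extreme eigenvalues of the symmetric positive semidefinite matrix $Y := (PQ)^T(PQ) = Q^T P^T P Q$. Since $\cond(X)^2 = \lambda_{\max}(Y)/\lambda_{\min}(Y)$, the event $\{\cond(X) \ge \sqrt{(1+\delta)/(1-\varepsilon)}\}$ is contained in the union $\{\lambda_{\max}(Y) \ge (1+\delta)\tfrac{l}{m}\} \cup \{\lambda_{\min}(Y) \le (1-\varepsilon)\tfrac{l}{m}\}$: indeed, if both $\lambda_{\max}(Y) < (1+\delta)\tfrac{l}{m}$ and $\lambda_{\min}(Y) > (1-\varepsilon)\tfrac{l}{m}$ held, their ratio would be strictly below $(1+\delta)/(1-\varepsilon)$. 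A union bound then reduces everything to the two one-sided tail estimates supplied by \cref{prop:chernoff}.

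To apply the Chernoff bound I would write $Y$ as a sum of independent rank-one matrices. Under equal-probability row sampling, $P$ selects $l$ rows, so $P^TP = \sum_{k=1}^l e_{i_k} e_{i_k}^T$ where $i_1,\dots,i_l$ are i.i.d.\ uniform on $\{1,\dots,m\}$. Writing $q_i^T := Q(i,:)$ for the $i$-th row of $Q$, this gives $Y = \sum_{k=1}^l X_k$ with $X_k := q_{i_k} q_{i_k}^T$, a family of i.i.d.\ symmetric positive semidefinite matrices. Each satisfies $0 \le \lambda_{\min}(X_k)$ and $\lambda_{\max}(X_k) = \|q_{i_k}\|^2 \le \theta(Q)$, so the uniform upper bound required by the hypothesis is exactly $L = \theta(Q)$.

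Next I would compute the mean. By uniformity and the orthogonality relation $Q^TQ = \sum_i q_i q_i^T = I_n$, one gets $\E X_k = \frac{1}{m}\sum_{i=1}^m q_i q_i^T = \frac{1}{m}I_n$, hence $\E Y = \frac{l}{m} I_n$ and $\mu_{\min} = \mu_{\max} = \frac{l}{m}$. Substituting $\mu_{\min}/L = \mu_{\max}/L = l/(m\theta(Q))$ into the two displays of \cref{prop:chernoff} produces precisely the two terms on the right-hand side, and the containment and union bound of the first paragraph assemble them into the claimed inequality.

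The argument is essentially mechanical once the decomposition is in place, so there is no serious obstacle; the two points deserving care are the identification $\E X_k = \frac{1}{m}I_n$ and the containment step. The former is where the orthogonality of $Q$ enters, and it is exactly what forces both eigenvalue thresholds to share the common scale $\frac{l}{m}$, so that their ratio collapses to $(1+\delta)/(1-\varepsilon)$ independently of $l$ and $m$; the latter requires only checking that the strict inequalities on $\lambda_{\max}(Y)$ and $\lambda_{\min}(Y)$ combine correctly. The role of $\theta(Q)$ is then transparent: it is the worst-case row energy that governs the effective sample size $l/(m\theta(Q))$ appearing in the Chernoff exponent.
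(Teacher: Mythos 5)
Your proposal is correct and follows essentially the same route as the paper's own proof: both reduce $\cond(X)$ to $\cond(PQ)$ via \cref{prop:cond-number-identity}, write $(PQ)^T(PQ)$ as a sum of $l$ i.i.d.\ rank-one outer products of rows of $Q$ with $\E = \tfrac{l}{m}I_n$ and uniform bound $L=\theta(Q)$, and apply \cref{prop:chernoff} with a union bound. Your write-up is if anything slightly more explicit about the containment/union-bound step than the paper's.
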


\begin{proof}
We first construct a matrix-valued random variable $X$, to adapt the result in \cref{prop:chernoff}. Separate $Q$ into $m$ row vectors $Q = [\bm q_1^T, \bm q_2^T, \cdots, \bm q_m^T]^T$, where $\bm q_i := Q(i,:)$ is the $i-$th row vector. Define 
\[ M = \bm q_k^T\bm q_k \in \mathbb{R}^{n \times n} \text{ with probability } \frac{1}{m} \text{ for each } k = 1,2,\cdots,m.\]
Then we have 
\begin{equation}
\E M = \frac{1}{m}(\bm q_1^T\bm q_1 +\cdots + \bm q_m^T\bm q_m) = \frac{1}{m}Q^TQ = \frac{1}{m}I_n,
\end{equation}
and it follows from $\lambda_{\max}(\bm q^T \bm q)\le \|\bm q\|^2$ that the following inequality holds,
\begin{equation}
\lambda_{\max}(M) \le \max_{k} \|\bm q_k\|^2 = \theta(Q),
\end{equation}
from the definition of $\theta(Q)$.

Set $M_1,\cdots, M_l \sim M$ independently, and $Y = M_1+M_2+\cdots M_l$. $\E Y = l~\E M =\frac{l}{m}I_n$, therefore $\lambda_{\min}(\E Y) = \lambda_{\max}(\E Y) = \frac{l}{m}$. Here we point out that $\cond(X) = \sqrt{\cond(Y)}$, which will be proved later. Hence it suffices to estimate $\cond(Y)$. By applying \cref{prop:chernoff} we obtain 

\begin{equation}
    \P\bigg[\lambda_{\min}(Y) \le (1-\varepsilon)\frac{l}{m} \bigg] \le n\left[\frac{e^{-\varepsilon}}{(1-\varepsilon)^{1-\varepsilon}}\right]^{\frac{l}{m}\frac{1}{\theta(Q)}} 
    \end{equation}
and 
\begin{equation}
    \P\bigg[\lambda_{\max}(Y) \ge (1+\varepsilon)\frac{l}{m}\bigg] \le n\left[\frac{e^{\varepsilon}}{(1+\varepsilon)^{1+\varepsilon}}\right]^{\frac{l}{m}\frac{1}{\theta(Q)}}.
    \end{equation}

Combining both inequalities, we conclude that
\[
    \P\left[ \cond(Y) \ge {\frac{1+\delta}{1-\varepsilon}} \right] \le n\left[\left[\frac{e^{-\varepsilon}}{(1-\varepsilon)^{1-\varepsilon}}\right]^{l/(m\theta(Q))} +\left[\frac{e^{\delta}}{(1+\delta)^{1+\delta}}\right]^{l/(m\theta(Q))} \right].
\]

Finally, we show $\cond(X) = \sqrt{\cond(Y)}$. It follows from \cref{prop:cond-number-identity} that if the submatrix $A_1 = [\bm a_{i_1}^T,\bm a_{i_2}^T,\cdots, \bm a_{i_l}^T]^T$, where $\bm a_k$ is the $k-$th row vector of $A$, then $\cond(X) = \cond(Q_1)$, where $Q_1 = [\bm q_{i_1}^T,\bm q_{i_2}^T,\cdots, \bm q_{i_l}^T]^T.$ Therefore \[Q_1^TQ_1 = \sum_{s = 1}^{l} = \bm q_{i_s}^T \bm q_{i_s} = Y,\] hence $\cond(Y) = \cond(X)^2$.
\end{proof}

 An observation is that the concentration result works if and only if $m\theta(Q)$ is small. It is believable that in general we have $m\theta(Q) = O(n)$, hence if $l = O(n \log n \log \delta^{-1})$ can ensure probability $1 - \delta$ to obtain such a preconditioner. However, this might not hold true for some case. Rethinking that case we have mentioned before, suppose $Q = [I, \,\, 0]^T$ where $I$ is identity matrix. In this case, $\theta(Q) = 1$, and the concentration works if and only if $l = O(m\log n\log \delta^{-1})$, which will eventually impractical when $n$ get larger.

Now consider the case when each row is chosen with unequal probability. The following theorem shows under some strategy, the $m\theta(Q)$ can be sharpened to $n$, which is the optimal rate. Though impractical, this theoretical result might shed light on the mechanism of the row choosing strategy, which seems promising in the future.

\begin{theorem}[Unequal Probability Case]
	\label{thm:unequal-prob-case}
    Suppose $A \in \mathbb{R}^{m \times n}$ with QR decomposition $A = QR$. The submatrix is generated by choosing $l$ row independently, while each row of submatrix is drawn as follows: for each $\bm a_i$, we select $\|\bm q_i\|^{-1} \bm a_i$ with probability $\frac{1}{n}\|\bm q_i\|^2$.
    Then for any $\delta > 0, \varepsilon \in (0,1)$, we have 
\begin{equation}
    \P\{ \cond(X) \ge \sqrt{\frac{1+\delta}{1-\varepsilon}} \} \le n\left[\left[\frac{e^{-\varepsilon}}{(1-\varepsilon)^{1-\varepsilon}}\right]^{l/n} +\left[\frac{e^{\delta}}{(1+\delta)^{1+\delta}}\right]^{l/n} \right],
\end{equation}
where $X$ is defined in \cref{alg:rqr-CholeskyQR}.
\end{theorem}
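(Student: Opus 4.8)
The plan is to follow the same scheme as the proof of \cref{thm:equal-prob-case}, replacing uniform sampling by the importance-weighted sampling and exploiting the normalization factor $\|\bm q_i\|^{-1}$ to obtain a \emph{uniform} spectral bound on each summand. The only structural change from the equal-probability case is the definition of the matrix-valued random variable; once that is set up correctly, I expect the bulk of the argument to transfer verbatim, with the Chernoff constant $L$ improving from $\theta(Q)$ to $1$.

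First I would define the matrix-valued random variable matched to the sampling rule. When the drawn row is $\|\bm q_k\|^{-1}\bm a_k$, which happens with probability $\tfrac{1}{n}\|\bm q_k\|^2$, the corresponding $Q$-row is $\|\bm q_k\|^{-1}\bm q_k$, so I set
\[
    M = \|\bm q_k\|^{-2}\,\bm q_k^T\bm q_k \in \mathbb{R}^{n\times n} \quad\text{with probability } \tfrac{1}{n}\|\bm q_k\|^2,\ k=1,\dots,m .
\]
Using $\sum_{k=1}^m \|\bm q_k\|^2 = \|Q\|_F^2 = n$, a direct computation gives $\E M = \tfrac{1}{n}\sum_{k=1}^m \bm q_k^T\bm q_k = \tfrac{1}{n}Q^TQ = \tfrac{1}{n}I_n$. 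The decisive point, and the reason the rate sharpens from $l/(m\theta(Q))$ to $l/n$, is that every realization now satisfies $\lambda_{\max}(M) = \|\bm q_k\|^{-2}\lambda_{\max}(\bm q_k^T\bm q_k) = \|\bm q_k\|^{-2}\|\bm q_k\|^2 = 1$: the normalization cancels the row norm exactly, so the Chernoff parameter $L$ equals $1$ uniformly rather than $\theta(Q)$.

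With this in hand I would set $M_1,\dots,M_l \sim M$ independently and $Y = M_1+\cdots+M_l$, so that $\E Y = \tfrac{l}{n}I_n$ and hence $\mu_{\min}=\mu_{\max}=l/n$. Feeding $L=1$ and these means into \cref{prop:chernoff} yields the two tail bounds with exponent $l/n$, and a union bound over the events $\{\lambda_{\min}(Y)\le(1-\varepsilon)l/n\}$ and $\{\lambda_{\max}(Y)\ge(1+\delta)l/n\}$ produces the stated estimate on $\cond(Y)$.

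Finally I would convert the bound on $\cond(Y)$ into the bound on $\cond(X)$. Writing $P$ for the (scaled) sampling operator so that $A_1 = PA$, I would note that the proof of \cref{prop:cond-number-identity} never uses that $P$ is an unweighted row-selection matrix---only the uniqueness of the QR decomposition---so it applies verbatim to the scaled $P$ and gives $\cond(X)=\cond(PQ)$. Since the rows of $PQ$ are precisely $\|\bm q_{i_s}\|^{-1}\bm q_{i_s}$, we have $(PQ)^T(PQ)=\sum_{s=1}^l \|\bm q_{i_s}\|^{-2}\bm q_{i_s}^T\bm q_{i_s}=Y$, whence $\cond(X)=\cond(PQ)=\sqrt{\cond(Y)}$ and the claim follows. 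I do not anticipate a genuine obstacle here; the only point requiring care is confirming that \cref{prop:cond-number-identity} tolerates the per-row rescaling built into $P$, which it does.
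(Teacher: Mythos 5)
Your proposal is correct and follows essentially the same route as the paper: redefine $M = \|\bm q_k\|^{-2}\bm q_k^T\bm q_k$ with probability $\tfrac{1}{n}\|\bm q_k\|^2$ so that $\E M = \tfrac{1}{n}I_n$ and $L = \lambda_{\max}(M) = 1$, then apply \cref{prop:chernoff} with $\mu_{\min}=\mu_{\max}=l/n$ and transfer to $\cond(X)$ via \cref{prop:cond-number-identity}. Your explicit check that \cref{prop:cond-number-identity} tolerates the per-row rescaling in $P$ is a point the paper leaves implicit ("the rest of the proof is similar"), and it is a worthwhile clarification.
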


\begin{proof}
We modify the construction of $M$ in \cref{thm:equal-prob-case}. Notice that \[\sum_{k = 1}^m \|\bm q_k\|^2 = \|Q\|^2 = n.\] Define 
\[M = \frac{1}{\|\bm q_k\|^2} \bm q_k^T\bm q_k \in \mathbb{R}^{n \times n} \text{ with probability } \frac{1}{n}\|\bm q_k\|^2 \text{ for each } k = 1,2,\cdots,m.\]
Then we have 
\begin{equation}
\E M = \frac{1}{n}(\bm q_1^T\bm q_1 +\cdots + \bm q_m^T\bm q_m) = \frac{1}{n}Q^TQ = I_n,
\end{equation}
and the following inequality holds,
\begin{equation}
L := \lambda_{\max}(M) = \max_{k} \|\bm q_k\|^2 = 1,
\end{equation}
from the definition of $\theta(Q)$. The rest of the proof is similar to that in \cref{thm:equal-prob-case}.
\end{proof}

Next, we show the result of the Gaussian ensemble. In actual computation, one might not hope to use Gaussian ensemble for its expansive arithmetic cost. However, what we believe is that this result reveals the numerical behavior of our proposed methods appropriately.

\begin{theorem}[Gaussian Ensemble Case]
Given $A \in \R^{m\times n}$ with QR decomposition $A = QR$, the submatrix is generated by left-multiplying Gaussian matrices $\Omega \in \R^{l\times m}$, where each component is drawn from standard normal distribution $\mathcal{N}(0,1)$ iid. Suppose $l >n$, then we have 
\begin{equation}
    \P \bigg[ \cond(X) \ge \frac{3 +\sqrt{\frac{n}{l}} }{1 - \sqrt{\frac{n}{l}}}\bigg] \le 2 e^{-\frac{(\sqrt{l} - \sqrt{n})^2}{8}},
\end{equation}
where $X$ is defined in \cref{alg:rqr-CholeskyQR}.
\end{theorem}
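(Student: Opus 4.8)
The plan is to reduce the statement to a well-understood question about the extreme singular values of a rectangular Gaussian matrix, and then to invoke standard nonasymptotic random matrix estimates together with Gaussian concentration. First I would apply \cref{prop:cond-number-identity} with $P = \Omega$ (the proposition only uses $A_1 = PA$ and the full-rank assumption, so it applies verbatim to the Gaussian case), which gives $\cond(X) = \cond(\Omega Q)$. The essential observation is then that $\Omega Q$ is again a Gaussian matrix: since $Q \in \R^{m \times n}$ has orthonormal columns, each row $\Omega(i,:)$ is distributed as $\mathcal{N}(0, I_m)$, and by the rotational invariance of the isotropic Gaussian the row vector $\Omega(i,:) Q$ is distributed as $\mathcal{N}(0, Q^T Q) = \mathcal{N}(0, I_n)$. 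As the rows are independent, $G := \Omega Q \in \R^{l \times n}$ has iid $\mathcal{N}(0,1)$ entries, and it suffices to control $\cond(G) = \sigma_{\max}(G)/\sigma_{\min}(G)$ for an $l \times n$ Gaussian matrix with $l > n$.

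Next I would use two ingredients. The first is the nonasymptotic bound on the expected extreme singular values of a Gaussian matrix (the Davidson--Szarek / Gordon estimates, see \cite{vershynin2018high}): $\E\,\sigma_{\max}(G) \le \sqrt{l} + \sqrt{n}$ and $\E\,\sigma_{\min}(G) \ge \sqrt{l} - \sqrt{n}$. The second is Gaussian Lipschitz concentration: both $G \mapsto \sigma_{\max}(G)$ and $G \mapsto \sigma_{\min}(G)$ are $1$-Lipschitz with respect to the Frobenius norm, so for every $t > 0$,
\[
\P\big[\sigma_{\max}(G) \ge \sqrt{l} + \sqrt{n} + t\big] \le e^{-t^2/2}, \qquad \P\big[\sigma_{\min}(G) \le \sqrt{l} - \sqrt{n} - t\big] \le e^{-t^2/2}.
\]

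The final step is to choose the deviation $t$ so that the two quantities combine into the stated ratio. Taking $t = \tfrac12(\sqrt{l} - \sqrt{n})$ gives, on the complement of the two bad events, $\sigma_{\max}(G) \le \tfrac12(3\sqrt{l} + \sqrt{n})$ and $\sigma_{\min}(G) \ge \tfrac12(\sqrt{l} - \sqrt{n})$, hence
\[
\cond(G) \le \frac{3\sqrt{l} + \sqrt{n}}{\sqrt{l} - \sqrt{n}} = \frac{3 + \sqrt{n/l}}{1 - \sqrt{n/l}},
\]
after dividing through by $\sqrt{l}$. A union bound over the two tail events, each of probability at most $e^{-t^2/2} = e^{-(\sqrt{l} - \sqrt{n})^2/8}$, yields the claimed bound $2 e^{-(\sqrt{l} - \sqrt{n})^2/8}$.

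I do not expect a genuinely hard step here: the conceptual crux is the rotational-invariance reduction showing that $\Omega Q$ is itself Gaussian, after which everything is an application of off-the-shelf random matrix theory. The only point requiring mild care is the bookkeeping behind the choice $t = \tfrac12(\sqrt{l} - \sqrt{n})$, which is tuned so that the numerator constant is exactly $3$ while the denominator retains the factor $1 - \sqrt{n/l}$; a different split of the deviation between $\sigma_{\max}$ and $\sigma_{\min}$ would alter these constants. One should also note that $t > 0$ requires $l > n$, which is precisely the standing hypothesis.
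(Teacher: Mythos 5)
Your proposal is correct and follows essentially the same route as the paper: the paper reduces $\cond(X)$ to $\cond(\Omega Q)$ via \cref{prop:cond-number-identity} and then cites Theorem 6.1 of \cite{wainwright2019high}, which is precisely the Gordon-expectation-plus-Gaussian-Lipschitz-concentration package you assemble by hand, and your choice $t = \tfrac12(\sqrt{l}-\sqrt{n})$ is the same as the paper's $\varepsilon = \tfrac12(1-\sqrt{n/l})$ after rescaling by $\sqrt{l}$. Your explicit remark that $\Omega Q$ is itself a standard $l\times n$ Gaussian matrix by rotational invariance is the (implicit) justification the paper relies on when invoking that theorem with $Q^TQ = I_n$.
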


\begin{proof}
Since $Q^TQ = I_m$, it follows from \cite[Theorem 6.1]{wainwright2019high} that
\begin{equation}
    \P \bigg[\sigma_{\max}(\Omega Q)/ \sqrt{l} > (1+\varepsilon) + \sqrt{\frac{n}{l}} \bigg] \le e^{-\frac{l\varepsilon^2}{2}}
\end{equation}
and 
\begin{equation}
     \P\bigg[\sigma_{\min}(\Omega Q)/\sqrt{l} < (1-\varepsilon) - \sqrt{\frac{n}{l}} \bigg] \le e^{-\frac{l\varepsilon^2}{2}}
\end{equation}

Choose $\varepsilon = \frac{1}{2}(1 - \sqrt{\frac{n}{l}})$, yielding

\begin{equation}
    \P\bigg[\sigma_{\max}(\Omega Q)/ \sqrt{l} >  \frac{3}{2} + \frac{1}{2}\sqrt{\frac{n}{l}} \bigg] \le e^{-\frac{(\sqrt{l} - \sqrt{n})^2}{8}}
\end{equation}
and 
\begin{equation}
     \P\bigg[\sigma_{\min}(\Omega Q)/\sqrt{l} < \frac{1}{2}( 1- \sqrt{\frac{n}{l}}) \bigg] \le  e^{-\frac{(\sqrt{l} - \sqrt{n})^2}{8}}
\end{equation}

It follows from \cref{prop:cond-number-identity} that $\cond(X) = \cond(\Omega Q)$, we conclude that 
\[\P \bigg[ \cond(X) \ge \frac{3 +\sqrt{\frac{n}{l}} }{1 - \sqrt{\frac{n}{l}}}\bigg] \le 2 e^{-\frac{(\sqrt{l} - \sqrt{n})^2}{8}}.\]
\end{proof}
\begin{remark}
If $l > 1.1n$, then simple estimation shows that 
\[\P \bigg[ \cond(X) \ge 100 \bigg] \le 2 e^{-\frac{(\sqrt{l} - \sqrt{n})^2}{8}}.\]
\end{remark}

\paragraph{Summarization} In summary, we provide the theoretical result, including both submatrix choosing strategies and Gaussian ensemble. These results shows that exponential decay might happen when the size of the submatrix becomes larger and larger. Hence, for a variety of matrices, our framework might produce a practical preconditioner. Unfortunately, the exponential decay of row choosing strategy does not hold for some matrices. We believe that this is due to large $\theta(Q)$, \textit{i.e.,} the matrix entries concentrate in a lower-dimensional matrix. 

\section{Numerical results}
\label{sec:experiments}
In this section, we present plenty of numerical experiments to illustrate our results. We compare the performance among HouseholderQR, CholeskyQR, CholeskyQR2 (from \cite{fukaya2014choleskyqr2}), shifted CholeskyQR3 (from \cite{fukaya2020shifted}), and two newly proposed algorithms: randomized LU-CholeskyQR and randomized QR-CholeskyQR. \Cref{sec:experiments:setup} introduces the setup of our experiments, including computing configurations and the strategy of generating test matrices. The next several sections discuss the stability, runtime, strong and weak scalability in \cref{sec:experiments:Accuracy,sec:experiments:speed,sec:experiments:scalability} respectively. In addition, \cref{sec:experiments:sampling} concerned about the relationship between the condition number and the number of sampling rows.

\subsection{Setup}
\label{sec:experiments:setup}

The experiments carried out in this section are all implemented in C++ language, with the help of OpenMPI\footnote{\href{https://www.open-mpi.org/}{https://www.open-mpi.org/}.} version 3.1.4, and BLAS and LAPACK libraries implemented in Intel MKL version 2017.1 on High-performance Computing Platform of Peking University, see \cref{specifications} for specifications. We use the double-precision, hence $\u := 2^{-52} \approx 2.22 \times 10^{-16}$, and the choice of shift in shifted CholeskyQR3 is $10^{-15}$.\footnote{This choice is different as that in \cite{fukaya2020shifted}, since the original choice will lead to a break-down when the matrix has a condition number $\sim 10^{15}$.}

\begin{table}[htbp]
	\centering
	\caption{Specifications of computing configuration.}
	\begin{tabular}{c|c}	
		\hline
		Item & Specification \\ 
		\hline
		CPU & Intel Xeon E5-2697A V4 (2.60GHz, 16 cores) \\
		Number of CPUs / node & 2 \\
		Number of threads / node & 32 \\
		Peak FLOPS / node & 1.33 TFlops \\
		Memory size / node & 256G \\ 
		\hline
	\end{tabular}
	\label{specifications}
\end{table}

We generate test matrices $A$ in the following manner, forming
%\begin{equation}
$	A = U \Sigma V^T \in \mathbb{R}^{m\times n}$,
%\end{equation}
where both $U \in \mathbb{R}^{m \times m}$ and $V \in \mathbb{R}^{n \times n}$ are orthogonal matrices from taking a full SVD of a random matrix, and
% \begin{equation}
$	\Sigma := \mathrm{diag}(1, \sigma^{\frac{1}{n-1}}, \dots, \sigma^{\frac{n-2}{n-1}}, \sigma).$
% \end{equation}
Here $\sigma \in (0,1)$ is a constant. Clearly we have $\|A\|_2 = 1$ and $\cond(A) = \kappa := \frac{1}{\sigma}$.
We denote the number of processes involved by $p$.

\subsection{Accuracy test}
\label{sec:experiments:Accuracy}
First, we examine the numerical stability of randomized LU-CholeskyQR and randomized QR-CholeskyQR (shortened as  \texttt{rLU-CholeskyQR} and \texttt{rQR-CholeskyQR} respectively), when fixing $l = 2n$. Throughout this section, two measurements will be investigated: orthogonality $\|Q^TQ-I\|_F$ and residual $\|A - QR\|_F / \|A\|_F$, where $F$ denotes Frobenius norm.

\Cref{accuracy1,accuracy2} display the numerical stability, where we take $m = 10^5$, $n = 100$, and the condition number $\kappa$ varies from $10^3$ to $10^{15}$. We see in \cref{accuracy1}, the orthogonality error of all methods except CholeskyQR are acceptable, in the sense that those are almost independent to the condition number. The figure indicates that the orthogonality error of CholeskyQR is about $O(\kappa^2 \u)$, only valid when $\kappa \leq 10^8$. The breakdown of CholeskyQR and CholeskyQR2 when $\kappa > 10^8$ has been already observed \cite{fukaya2014choleskyqr2}. For the residual aspect, see \cref{accuracy2}, all methods listed here are stable, expect a slight difference is detected in our tests (HouseholderQR performs a little worse while CholeskyQR performs a little better).

\begin{figure}[htbp]
\begin{center}
	\includegraphics[width=0.8\columnwidth]{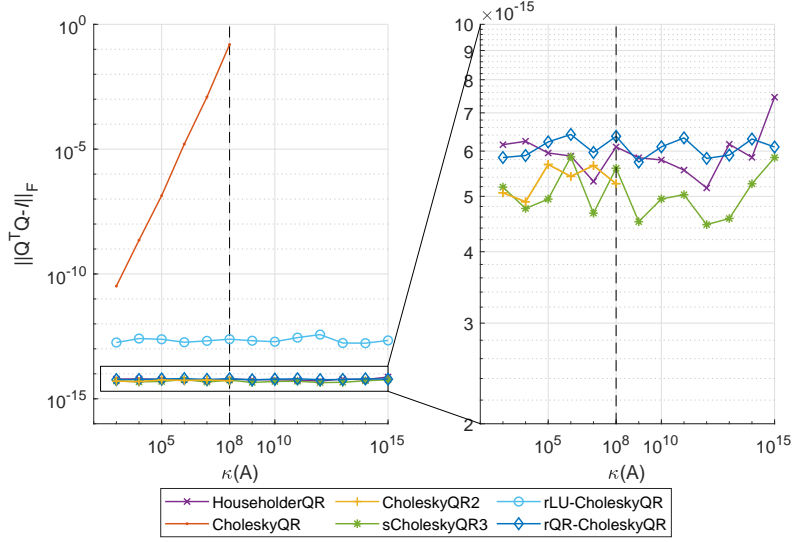}
	\caption{$\| Q^TQ - I\|_F$ for different algorithms when $m=10^5$, $n = 100$. }
	\label{accuracy1}
\end{center}
\end{figure}

\begin{figure}[htbp]
\begin{center}
	\includegraphics[width=0.8\columnwidth]{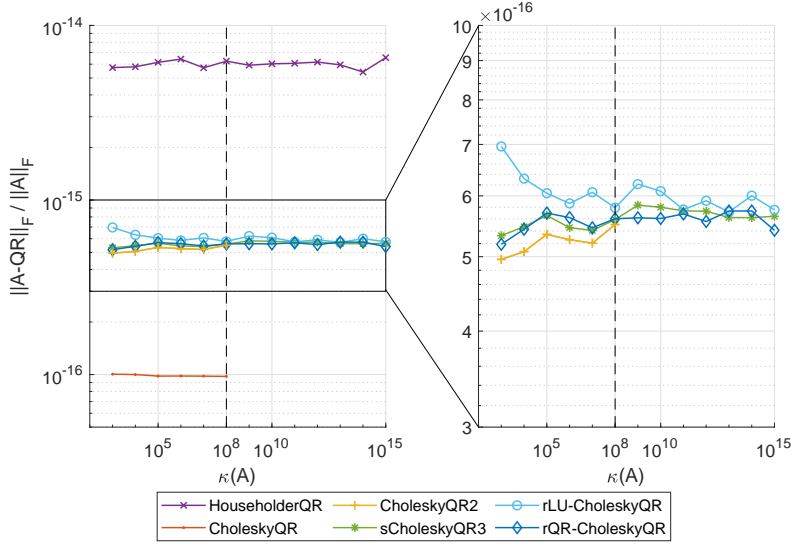}
	\caption{$\frac{\|A - QR\|_F}{\|A\|_F}$ for different algorithms when $m=10^5$, $n = 100$. }
	\label{accuracy2}
\end{center}
\end{figure}

Next, we show the dependence of numerical stability on the size of matrices. We take $\kappa = 10^{5}$ and fix $m$ and $n$ respectively, see \cref{accuracy3,accuracy4,accuracy5,accuracy6}. It follows from the numerical results that CholeskyQR is unstable due to the large condition number. The rLU-CholeskyQR algorithm performs a little worse, while the orthogonality error of other methods increases with $m$ and $n$ only mildly, and all stable. All tests support that the residual is negligible for most practical cases.

\begin{figure}[htbp]
\begin{center}
	\includegraphics[width=0.8\columnwidth]{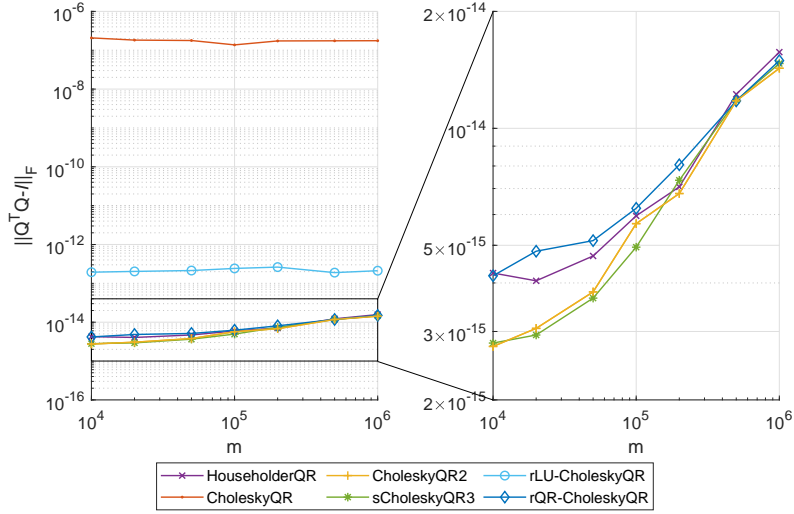}
	\caption{$\| Q^TQ - I\|_F$ for different algorithms when $n = 100$, $\kappa = 10^{5}$.}
	\label{accuracy3}
\end{center}
\end{figure}

\begin{figure}[htbp]
\begin{center}
	\includegraphics[width=0.8\columnwidth]{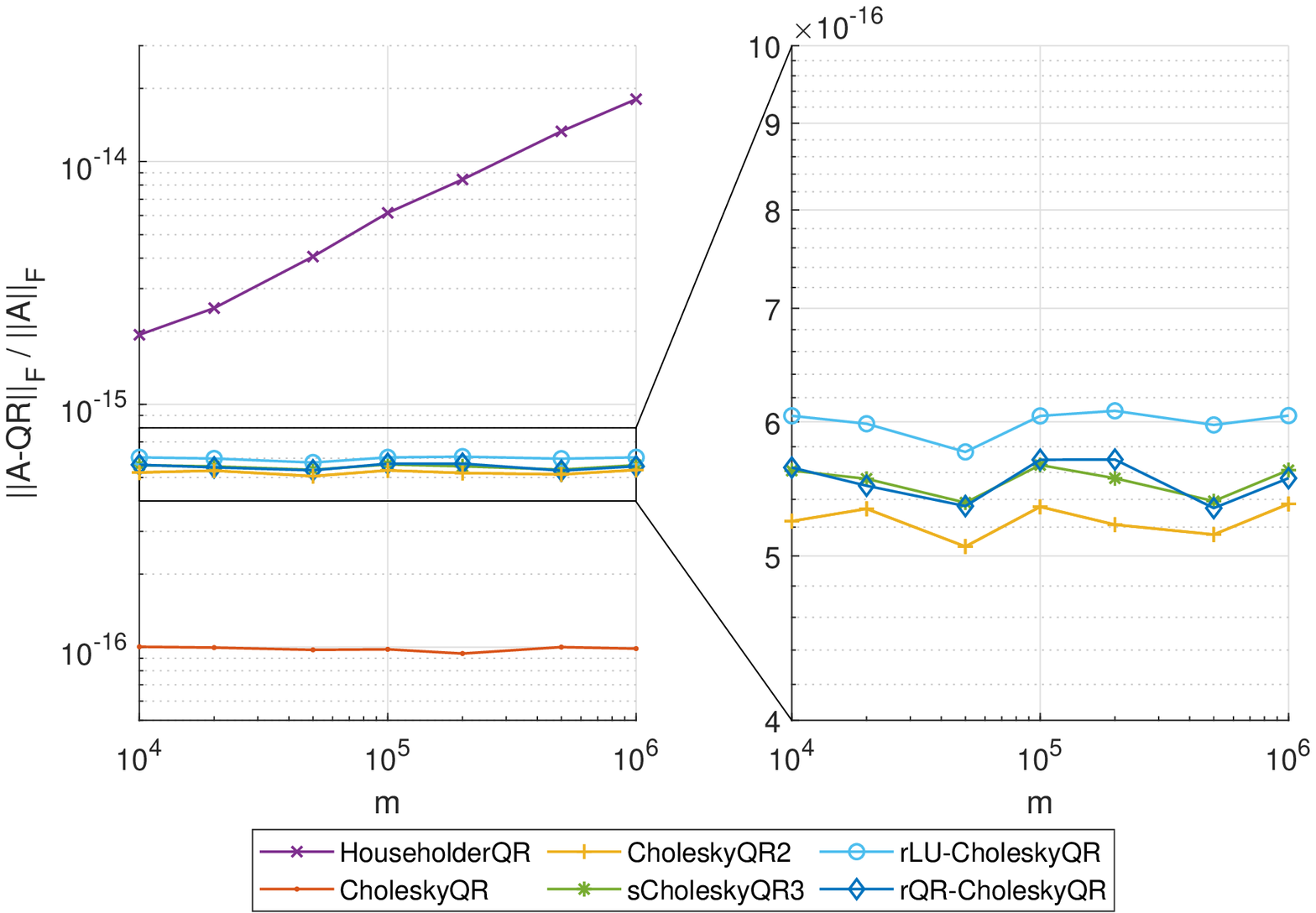}
	\caption{$\frac{\|A - QR\|_F}{\|A\|_F}$ for different algorithms when  $n = 100$, $\kappa = 10^{5}$ }
	\label{accuracy4}
\end{center}
\end{figure}

%As for the accuracy, the fig \ref{accuracy4} indicates except HouseholderQR other five methods would not become inaccurate with $m$ increasing. Although the accuracy of HouseholderQR has a trend of increasing, $\frac{\|A - QR\|_F}{\|A\|_F}$ is also smaller than $10^{-13}$ when decomposing matrix with $m = 10^6$, which is acceptable in most case.

%Last, we vary $n$ from 100 to 2000 with fix $m=10^5$ and $\kappa = 10^5$. As fig \ref{accuracy5} and fig \ref{accuracy6} show, the performance of CholeskyQR in orthogonality is still the worst though its decomposition accuracy $\frac{\|A - QR\|_F}{\|A\|_F}$ is slightly better than the others. The orthogonality and decomposition accuracy of rLU-CholeskyQR are both become worse and worse when $n$ becomes larger. The results of the other four algorithms don't change a lot when $n$ takes different value.

\begin{figure}[htbp]
\begin{center}
	\includegraphics[width=0.8\columnwidth]{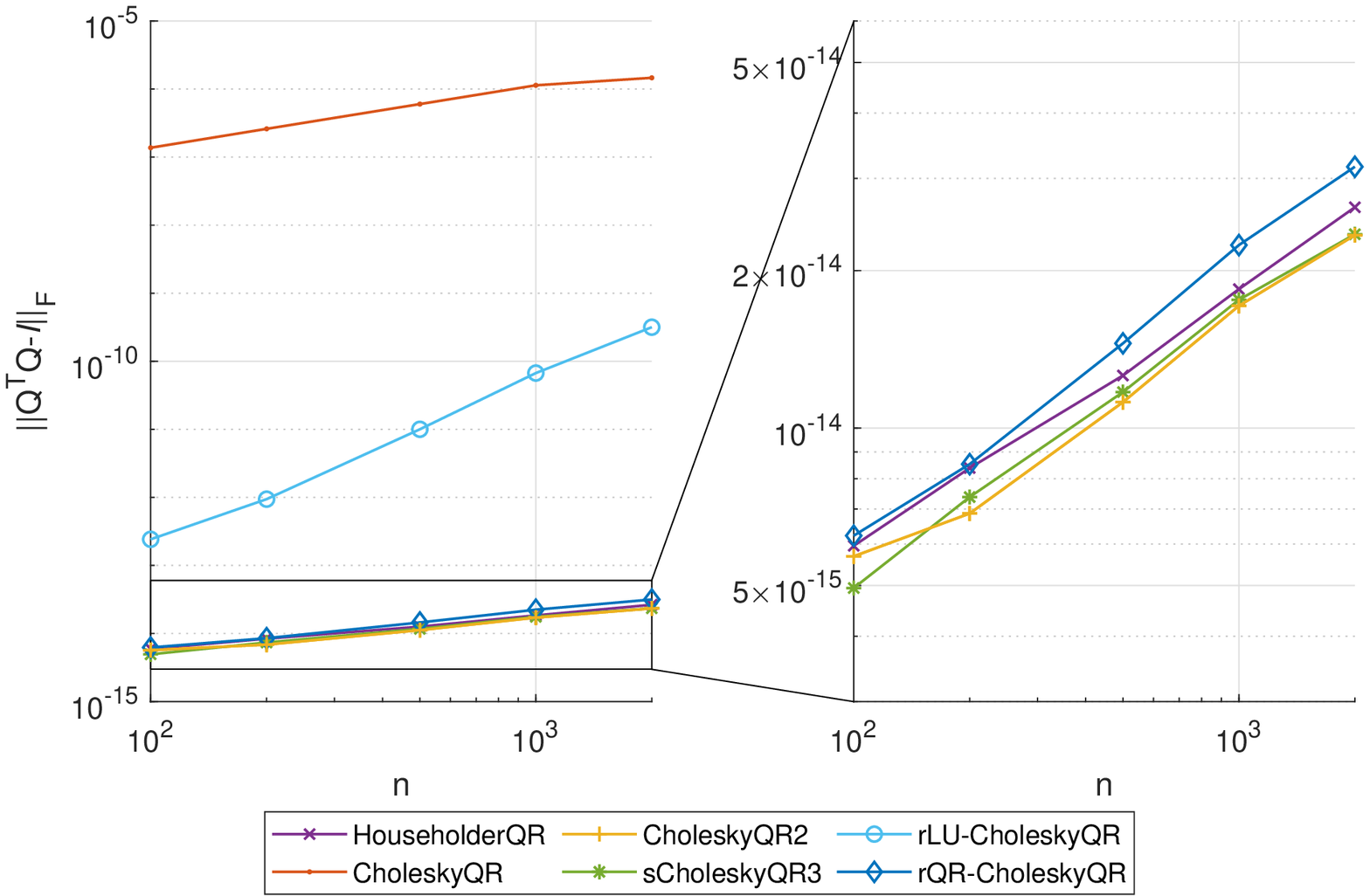}
	\caption{$\| Q^TQ - I\|_F$ for different algorithms when $m=10^5$, $\kappa = 10^{5}$.}
	\label{accuracy5}
\end{center}
\end{figure}

\begin{figure}[htbp]
	\label{accuracy6}
\begin{center}
	\includegraphics[width=0.8\columnwidth]{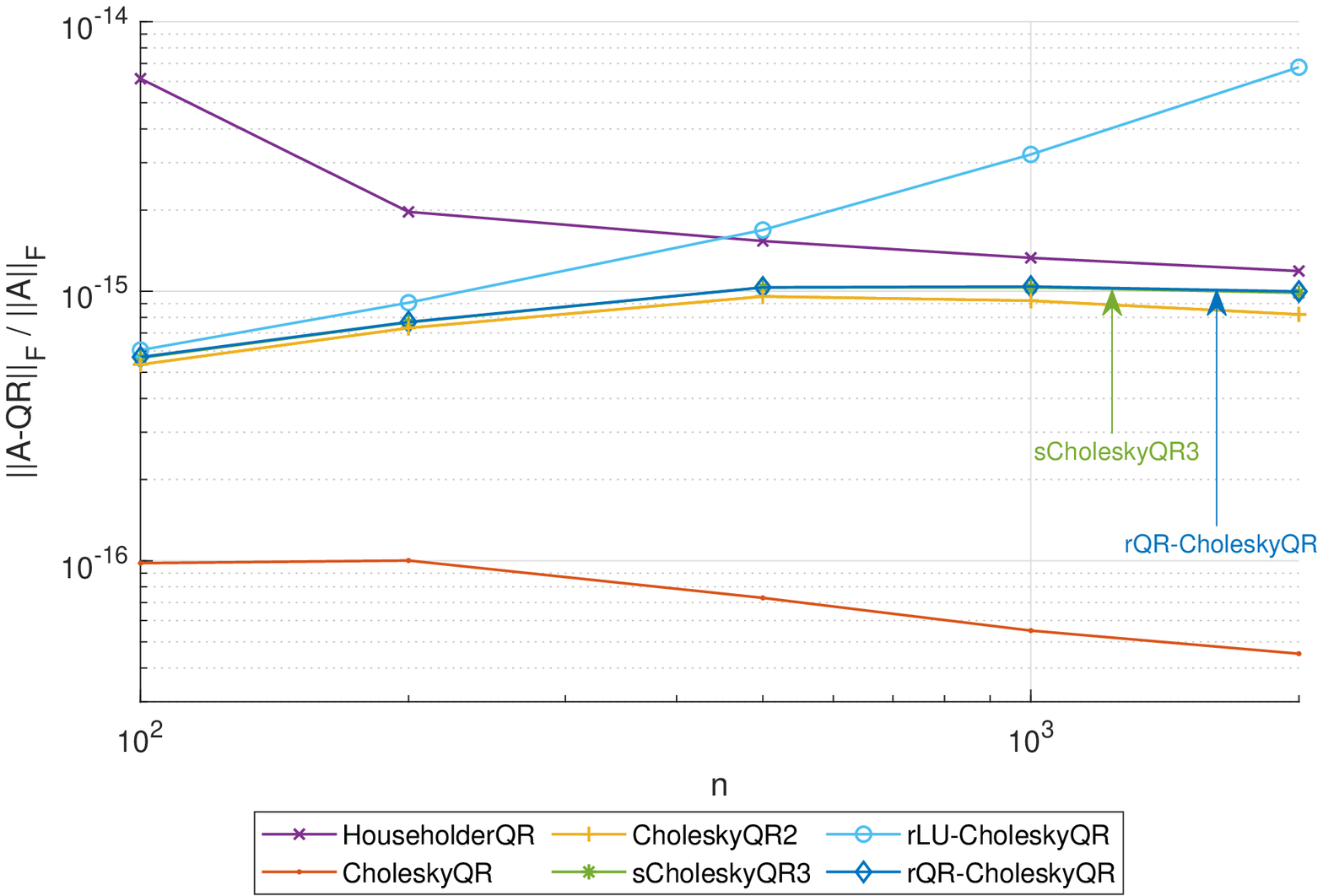}
	\caption{$\frac{\|A - QR\|_F}{\|A\|_F}$ for different algorithms when  $m=10^5$, $\kappa = 10^{5}$. }

\end{center}
\end{figure}

\subsection{Runtime test}
\label{sec:experiments:speed}
Now we turn to evaluate the runtime performance of our proposed methods compared to others. For single process, we take $n = 100, \kappa = 10^5$ and test several methods. As shown in \cref{speed1}, CholeskyQR outperforms other methods. Among all stable methods (\textit{i.e.}, exclude CholeskyQR), our proposed methods enjoy the best runtime performance, and the cost time of randomized LU/QR-CholeskyQR is about the same. It is worth noting that our methods are even faster than CholeskyQR2, which is proven limited in the previous subsection. All results in \cref{speed1} indicate that the computational cost is proportional to $m$, the size of the matrix, which corroborates the theoretical counting. 

\begin{figure}[htbp]
	\begin{center}
		\includegraphics[width=0.65\columnwidth]{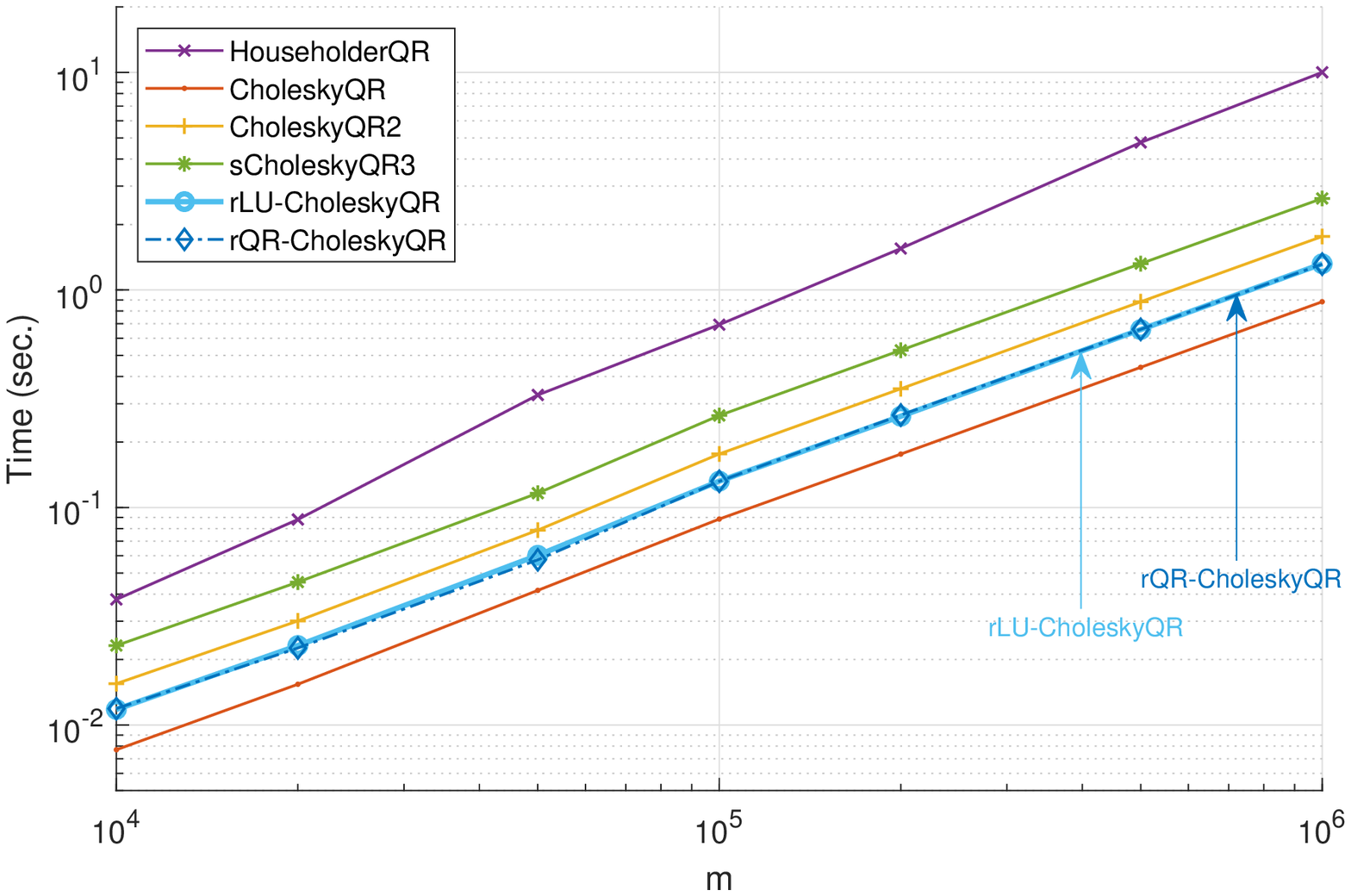}
		\caption{Runtime for different algorithms when $n = 100$, $\kappa = 10^5$, using 1 process.}
		\label{speed1}
	\end{center}
\end{figure}

To see whether similar results hold for multi-processes, we carry out the experiments in 32 processes. Here for convenience, we only choose methods that are easy to parallelize. We use the CholeskyQR2 algorithm instead of CholeskyQR for comparison, omit rLU-CholeskyQR since it has a similar performance as rQR-CholeskyQR. \cref{speed32_1} shows the result when taking $n = 100, \kappa  = 100,000$ while $m$ varies from $10^5$ to $10^7$. The runtime performance of those methods coincident with those in a single process. 
For completeness, we provide \cref{speed32_2}, illustrating the result when taking $n = 100, m = 100,000$ while $\kappa$ varies from $10^3$ to $10^{15}$. It indicates that CholeskyQR2 and sCholeskyQR3 take $24\%$ and $82\%$ more time than rQR-CholeskyQR, respectively. And CholeskyQR2 will collapse when the condition number is bigger than $10^8$.

\begin{figure}[htbp]
	\begin{center}
		\includegraphics[width=0.65\columnwidth]{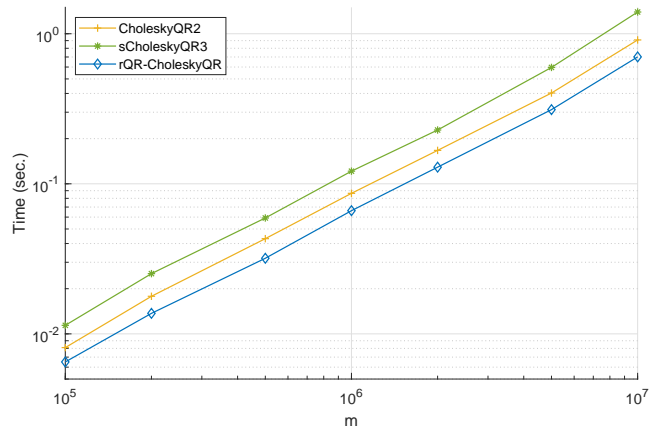}
		\caption{Runtime for different algorithms when $n = 100$, $\kappa = 10^5$, using 32 processes. }
		\label{speed32_1}
	\end{center}
\end{figure}

\begin{figure}[htbp]
	\begin{center}
		\includegraphics[width=0.65\columnwidth]{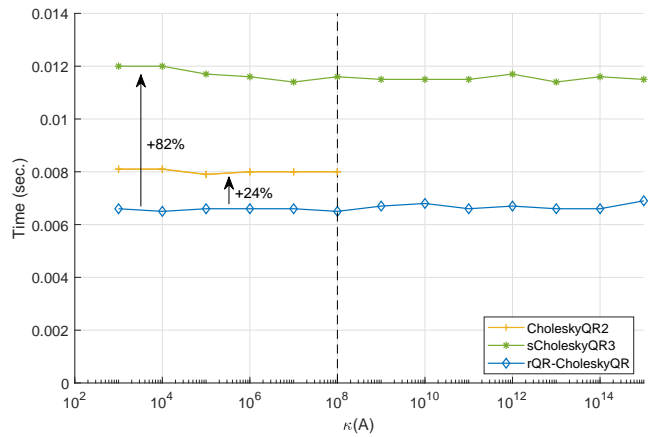}
		\caption{Time for different algorithms when $m=10^5$, $n = 100$, using 32 processes.}	
		\label{speed32_2}
	\end{center}
\end{figure}

\subsection{Scalability test}
\label{sec:experiments:scalability}
In this subsection, we examine the scalability of rQR-CholeskyQR, comparing to the CholeskyQR2 algorithm and the shifted CholeskyQR3 algorithm. We consider both strong scalability and weak scalability. The former focuses on how running time varies with the number of processors for a fixed total problem size. And the latter focuses on how the running time varies with the number of processors when fixing problem size tackled by each process.

As \cref{scalability1} shows, they all have good strong scalability. As the number of processes increases, the speedup ratio also increases, proportional to the number of processes. When using 512 processes, they speed up around 36 times than 8 processes, which indicates these methods can be used in the multi-processes configuration.

\begin{table}[htbp]
	\centering
	\caption{Computational time with different number of processes when $m = 10^7$, $n = 100$, $\kappa = 10^5$, speedup ratios are in brackets.}
	\begin{tabular}{c|ccc}	
		\hline
		processes & CholeskyQR2 & sCholeskyQR3 & rQR-CholeskyQR \\
		\hline
		8 & 2.3181 & 3.6855 & 1.8643 \\
		16 	& 1.4032 (1.65) 	& 2.2035 (1.67) 	& 1.1706 (1.59) \\
		32 	& 0.7886 (2.93) 	& 1.1413 (3.22) 	& 0.6152 (3.03) \\
		64 	& 0.4187 (5.53) 	& 0.5993 (6.14) 	& 0.3341 (5.58) \\
		128 & 0.2386 (9.71) 	& 0.3301 (11.16) 	& 0.1925 (9.68) \\
		256 & 0.1251 (18.52) 	& 0.1911 (19.28) 	& 0.1026 (18.17) \\
		512 & 0.0631 (36.74) 	& 0.0958 (38.47) 	& 0.052 (35.85) \\
		\hline
	\end{tabular}
	\label{scalability1}
\end{table}

The weak scalability can be easily derived from above results. The running time is proportional to the scale of matrix, which says if we double the row of matrix $m$, the running time will also double. But if we also double the number of processes $p$ at the same time, it will cost almost the same time as before or slightly more to compute. In short, fixing $n$, $\kappa$ and $m/p$, the runtime would only increase mildly when $m$ grows. As \cref{scalability2} shows, when $m$ varies from $10,000$ to $5,120,000$, the runtime of CholeskyQR2 and sCholeskyQR3 only become 2.26 and 2.25 times larger. Among these, rQR-CholeskyQR performs the best since it only consumes 2.21 times more time than before.

\begin{figure}[htbp]
	\begin{center}
		\includegraphics[width=0.8\columnwidth]{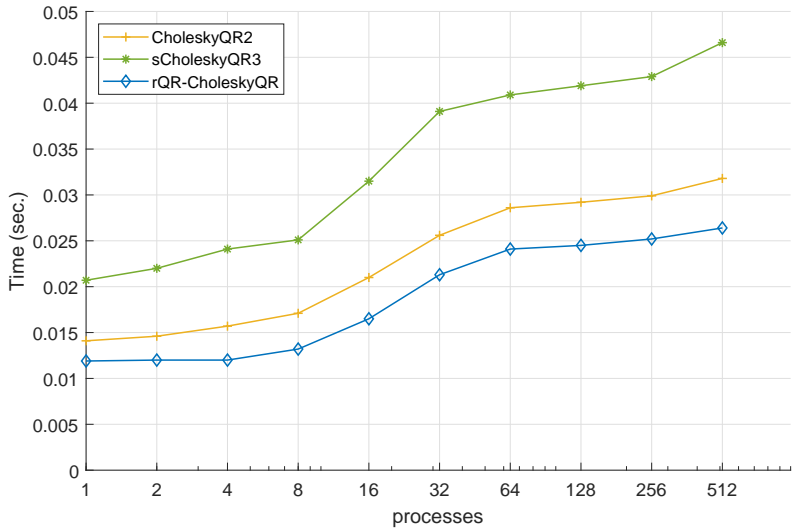}
		\caption{Time for different algorithms when $n = 100$, $\kappa = 10^5$ and fix $\frac{m}{p} = 10^5$}
		\label{scalability2}
	\end{center}
\end{figure}

\subsection{Dependence on numbers of sampling rows}
\label{sec:experiments:sampling}
Finally, we concentrate on the following problem: how many rows do we need to sample to ensure the condition numbers of the preconditioned matrices are small enough. Notice that in previous subsections, the sampling rate $\frac{l}{n}$ is selected as 2, a fixed number. In this section, we reveal the relation between the condition number (on expectation) and the sampling rate $\frac{l}{n} >1$. It seems essential in the sense that it has a strong impact on the final performance on preconditioning, but we will show our method is not so sensitive to this parameter, as soon as the sampling rate departs away from $1$.
%The selection of sampling rate will affect the condition number after precondition, then the accuracy of the result. 

As \cref{sampling} shows, a slight enlargement in sampling rate might lead to exponential decay in the condition number of $\hat Q$. When the sampling rate equals to 1, the condition number will possibly reach 3000 at most. However, if the sampling rate becomes a little larger, such as 1.2, the condition number of $\hat Q$ will drop down to 20, which is helpful in subsequent CholeskyQR. When the sampling rate continues to increase, the condition number after preconditioning will continue to drop, but not so significantly as before. For practical usage, we recommend choosing the sampling rate from 1.5 to 2.

\begin{figure}[htbp]
\begin{center}	
	\includegraphics[width=0.8\columnwidth]{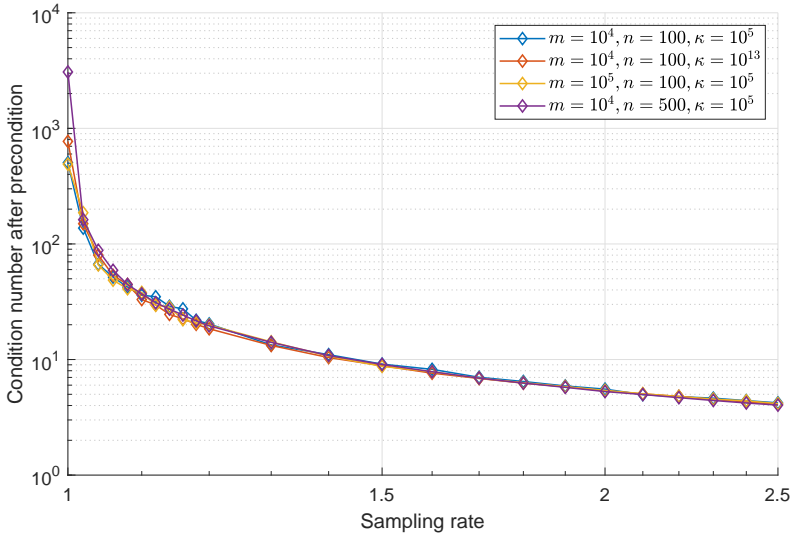}
	\caption{Condition number after preconditioning in four experiments}
	\label{sampling}
\end{center}
\end{figure}

\section{Applications}
\label{sec:Applications}

In this section, two examples are provided to explain the motivation of studying QR decomposition of tall-and-skinny matrices, which also serve as a natural application for our proposed methods. 

Nevertheless, this short section cannot encompass all cases using the proposed method. For example,  the computation of tall and skinny QR in LOBPCG takes up lots of time. Also, the block QR decomposition relies on tall-and-skinny matrices heavily. Hence it is worth mentioning that our algorithms can accelerate these numerical methods, and many of them are within the scope of future work.

\subsection{Randomized singular value decomposition}

Randomized SVD, proposed by \cite{granat2010novel}, aims to calculate several largest singular vector approximately for large scale matrices. It has been widely used and analyzed, see \cite{halko2011finding}. The basic version of RSVD is introduced as follows:
\begin{algorithm}[htbp]
\caption{Vanilla RSVD of $A \in \R^{m\times n}$}
\begin{algorithmic}[1]
\STATE Generate a random matrix $\Omega \in \R^{m\times k}$.
\STATE Set $Y = A^T\Omega \in \R^{n\times k}$.
\STATE Compute the SVD of $Y = \tilde U\Sigma V^T$.
\STATE Set $U = A\tilde U$.
\end{algorithmic}
\end{algorithm}

This algorithm might suffer from loosing accuracy. Instead of using vanilla version, a power method iteration is usually used to improve its accuracy, see \cref{alg:rsvd}. For further discussion about how to improve RSVD, see \cite{musco2015randomized}.
\begin{algorithm}[htbp] 
\caption{RSVD with Power Iteration}
\label{alg:rsvd}
\begin{algorithmic}[1]
\STATE Generate a random matrix $X \in \R^{n \times k}$.
\STATE $Y = AX.$
\STATE $Y = \text{orth}(Y)$.
\FOR{$k = 1$ to $K$}
\STATE $Y = A^TY.$
\STATE $Y = \text{orth}(Y)$.
\STATE $Y = AY$.
\STATE $Y = \text{orth}(Y)$.
\ENDFOR
\STATE Compute the SVD of $Y = \tilde U\Sigma V^T$.
\STATE Set $U = A\tilde U$.
\end{algorithmic}
\end{algorithm}

Often, the orthogonal step uses QR decomposition, for example, MATLAB built-in \texttt{qr} function. Therefore, we can use our proposed method instead. To test our proposed QR decomposition, we select several sparse matrices and report the time cost of each iteration. Here all the experiments are performed in MATLAB R2020b, on a personal laptop. The matrices we test are all selected from SuiteSparse Matrix Collection\footnote{\href{https://sparse.tamu.edu/}{https://sparse.tamu.edu/}.}, here we list the brief information of them, see \cref{table:info}. 

\begin{table}[htbp]
\centering
\caption{Information of test matrices.}
\label{table:info}
\begin{tabular}{|c|c|c|c|c|c|}
\hline
           & rows      & columns                                                                       & nnz                                                                            & density & nnz/row \\ \hline
Ge87H76    & 112,985   & 112,985                                                                       & 7,892,195                                                                      & 6.18e-4 & 69.85   \\ \hline
Si87H76    & 240,369   & 240,369                                                                       &  10,661,631 & 1.85e-4 & 44.36   \\ \hline
Hamrle3    & 1,447,360 & 1,447,360 &  5,514,242  & 2.63e-6 & 3.81    \\ \hline
G3\_circuit & 1,585,478 & 1,585,478                                                                     & 7,660,826                                                                      & 3.04e-6 & 4.83    \\ \hline
\end{tabular}
\end{table}
\Cref{tab:rsvd} shows the cost time in each iteration in \cref{alg:rsvd}. Notice that in each iteration, we simply replace MATLAB built-in QR (denoted as \textbf{QR}) and our proposed randomized QR-CholeskyQR (denoted as \textbf{rQR}), and the acceleration becomes more powerful, as the matrices becomes more sparse.
\begin{table}[htbp]
\centering
\caption{The comparison between using QR and rQRCholeskyQR.}
\label{tab:rsvd}
\begin{tabular}{|c|c|c|c|c|c|c|c|c|c|}
\hline
\multirow{2}{*}{time(s)} & \multicolumn{3}{c|}{$k = 20$} & \multicolumn{3}{c|}{$k = 50$} & \multicolumn{3}{c|}{$k = 100$} \\ \cline{2-10} 
                      & QR  & rQR & ratio & QR  & rQR & ratio & QR  & rQR  & ratio \\ \hline
Ge87H76               & 0.308     & 0.213   & 1.45  &  0.741     &  0.557     & 1.33  & 2.087     & 1.509      & 1.38  \\ \hline
Si87H76               & 0.540     & 0.321    & 1.68  & 1.466     & 0.912    & 1.61  & 3.341     & 2.021     & 1.65 \\ \hline
Hamrle3               & 2.528    & 0.762   & 3.27 & 6.790   & 2.653     & 2.56 & 12.392   & 4.559   & 2.72  \\ \hline
G3\_circuit            & 2.904    & 0.903   & 3.21 & 5.964   & 2.406    & 2.48 & 15.128   & 5.135     & 2.95  \\ \hline
\end{tabular}
\end{table}

\subsection{Least square}
In this section, we extend our interest in the $R$ factor of randomized QR. Consider the Least Square problem of $Ax = b$, which is usually solved by the normal equation 
$x = (A^TA)^{-1}A^Tb$, provided $A$ is of the full rank. However, the solver of the normal equation also suffers from ill-conditioned $A^TA$. Hence a suitable preconditioner is required.
As we remarked before, the $R$ factor of the submatrix, denoted as $R_1$, can be regarded as a suitable preconditioner since $\cond(AR_1^{-1})$ is small with high probability. The algorithm using the randomized $R$ factor can be summarized as follows.

\begin{algorithm}[htbp]
\caption{LS solver preconditioned by randomized $R$ factor}
\begin{algorithmic}[1]
\STATE Compute $B = AR_1^{-1}$.
\STATE Solve LS problem $By = b$ by normal equation: $y = (B^TB)^{-1}B^Tb$.
\STATE Solve $x$ from $y$: $x = R_1^{-1}y$.
\end{algorithmic}
\end{algorithm}

% \section{Discussions}
% \label{sec:discussions}

\appendix

\section{Estimations on rQR-CholeskyQR under inexact procedure}
\label{app:round-off-error}

In this section, we focus on the estimation in \cref{alg:rqr-CholeskyQR} when the sub-routine is not exactly performed. The inexactness often comes from round-off error, which will be mainly discussed in this section. The main spirit of this section is that, under a stable choice of sub-routine, the result is also stable, in the sense that the condition number of computed $X$ (denoted by $\hat{X}$ for clarity) enjoys a mild increase than $X$. We first assume that our subroutine (QR and triangular solver) are forward stable under  $\ell^2$ norm.

% \begin{definition}
% A $R$ factor $\hat{R}$ of $V \in \R^{l \times n}$ is $\alpha$-stable if there exist an orthogonal matrix $\underline{Q}$ such that $V + \Delta V = \underline{Q} \hat{R}$ and the following estimates holds:
% \begin{equation}
% \|\Delta V\|_{2} \le \alpha \|V\|_2.
% \end{equation}
% \end{definition}
\begin{definition}
The QR decomposition is $\alpha$-stable for inexact QR
\[ A \approx (Q + \Delta Q) (R + \Delta R) ,\]
we have  
\[ \|\Delta R R^{-1}\|_2 \le \alpha \cond(R). \]
\end{definition}

\begin{definition}
$Y \approx AR^{-1}$ is $\beta$-stable with respect to triangular system $(A,R)$ if the following estimation holds 
\begin{equation}
\|\Delta Y\|_2 / \|Y\|_2 \le \beta \cond(A),
\end{equation}
where $Y + \Delta Y = AR^{-1}$.
\end{definition}

We now give a framework on inexact version of \cref{alg:rqr-CholeskyQR}.

\begin{algorithm}[htbp]
	\caption{$[Q,R]$ = \aqrcholqr($A$) (Inexact Version)}
	\label{alg:rqr-CholeskyQR-inexact}
	\begin{algorithmic}[1]
	\STATE Choose $A_1$ as a submatrix of $A$, with size $l\times n$.
	\STATE $[\sim,\hat{R}_1] \approx \qr(A_1)$
	\STATE $\hat{X} \approx A\hat{R}_1^{-1}$. 
	\STATE $[Q,R_2]$ = \cholqr($\hat{X}$).
	\STATE $R = R_2\hat{R}_1$.
	\end{algorithmic}
	\end{algorithm}

\begin{proposition}
	Suppose that for a submatrix $A_1$, $\hat{R}_1$ is an $\alpha$-stable $R$ factor of $A$ via QR decomposition. $\hat{X}$ is $\beta$-stable with respect to $(A,\hat{R}_1)$. We assume that $\beta\cond(A) < \frac{1}{2}, \alpha\cond(A_1) < \frac{1}{2}$, then we have 
	\begin{equation}
	\|X - \hat{X}\|_2 \le 2(\beta \cond(A) + \alpha\cond(A_1)) \|X\|_2.
	\end{equation}
	Moreover, denote by $\gamma = \|X - \hat{X}\|_2/ \|X\|_2 \le 2(\beta \cond(A) + \alpha\cond(A_1))$, we assume $\gamma \cond(X) <1$, then 
	\begin{equation}
		\cond(\hat{X}) \le \frac{1+\gamma}{1-\gamma} \cond(X).
	\end{equation}
	\end{proposition}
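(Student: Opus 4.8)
The plan is to prove the two estimates separately, reducing each to an elementary perturbation argument. The central device is to insert the intermediate quantity $X' := A\hat{R}_1^{-1}$, the \emph{exact} triangular solve applied to the \emph{inexact} factor $\hat{R}_1$. The decomposition $X - \hat{X} = (X - X') + (X' - \hat{X})$ then cleanly isolates the two error sources: the first difference is caused solely by the inexactness of the QR factor (controlled by $\alpha$), and the second solely by the inexactness of the triangular solver (controlled by $\beta$). The first claim will follow by bounding each piece and applying the triangle inequality; the second claim will follow from a singular-value perturbation argument.

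For the first difference I would use the $\alpha$-stability definition to write $\hat{R}_1 = R_1 + \Delta R_1 = (I+E)R_1$ with $E := \Delta R_1 R_1^{-1}$, so that $\|E\|_2 \le \alpha\cond(R_1) = \alpha\cond(A_1)$, invoking the identity $\cond(R_1)=\cond(A_1)$ from the QR decomposition of $A_1$ recalled at the start of \cref{sec:proof}. Since $X = AR_1^{-1}$ exactly, a one-line computation gives the purely multiplicative relation $X' = AR_1^{-1}(I+E)^{-1} = X(I+E)^{-1}$, whence $X - X' = X(I+E)^{-1}E$. The hypothesis $\alpha\cond(A_1) < \frac{1}{2}$ makes $I+E$ invertible with $\|(I+E)^{-1}\|_2 \le (1-\|E\|_2)^{-1} < 2$ by a Neumann-series bound, yielding $\|X - X'\|_2 \le 2\,\alpha\cond(A_1)\,\|X\|_2$.

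For the second difference, the $\beta$-stability of the solve against $(A,\hat{R}_1)$ gives $\|X' - \hat{X}\|_2 \le \beta\cond(A)\,\|\hat{X}\|_2$; I would then convert the norm on the right to $\|X\|_2$ using $\|\hat{X}\|_2 \le (1-\beta\cond(A))^{-1}\|X'\|_2$ and $\|X'\|_2 \le (1-\alpha\cond(A_1))^{-1}\|X\|_2$, each geometric factor being at most $2$ under the standing hypotheses. Combining this with the previous paragraph by the triangle inequality and collecting the constants produces $\|X - \hat{X}\|_2 \le 2(\beta\cond(A) + \alpha\cond(A_1))\,\|X\|_2$, which is the first assertion and the definition of $\gamma$.

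Finally, for the condition-number estimate I would pass through a singular-value perturbation argument. Writing $\hat{X} = X + \Delta X$ with $\|\Delta X\|_2 = \gamma\|X\|_2 = \gamma\,\sigma_{\max}(X)$, Weyl's inequality for singular values gives $\sigma_{\max}(\hat{X}) \le (1+\gamma)\sigma_{\max}(X)$ and $\sigma_{\min}(\hat{X}) \ge \sigma_{\min}(X) - \gamma\,\sigma_{\max}(X)$; forming the quotient and dividing through by $\sigma_{\min}(X)$ yields a bound of the asserted shape, where the assumption $\gamma\cond(X) < 1$ is \emph{exactly} the condition that keeps the denominator positive, i.e.\ that keeps $\hat{X}$ of full column rank so that the subsequent CholeskyQR step is well posed. (Equivalently, one may exploit that the dominant error $X' = X(I+E)^{-1}$ is a genuinely right-multiplicative perturbation, for which submultiplicativity of $\sigma_{\max}$ together with $\sigma_{\min}(X(I+E)^{-1}) \ge \sigma_{\min}(X)\,\|I+E\|_2^{-1}$ gives $\cond(X') \le \frac{1+\gamma}{1-\gamma}\cond(X)$ directly.) I expect the main obstacle to be precisely this non-degeneracy control: guaranteeing that the perturbation does not collapse the smallest singular value and tracking the threshold $\gamma\cond(X)<1$ carefully, since this is where the solvability of the final step is at stake; the constant-chasing in the first claim is routine by comparison.
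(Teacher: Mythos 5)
Your route is essentially the paper's own proof: you insert the same intermediate quantity $X'=A\hat R_1^{-1}$ (the paper calls it $Y$), split the error into a QR-factor part controlled by $\alpha$ and a solver part controlled by $\beta$, apply the triangle inequality, and finish with a Weyl-type singular-value perturbation. Two details, however, do not close as written. First, the constant in the first claim: bounding $\|X'-\hat X\|_2\le\beta\cond(A)\|\hat X\|_2$ (the literal reading of the $\beta$-stability definition) and then converting $\|\hat X\|_2\to\|X'\|_2\to\|X\|_2$ costs two geometric factors, so your $\beta$ term carries a constant up to $4$ rather than $2$. The paper avoids this by measuring the solver error against the exact $Y=X'$, obtaining $\|X-\hat X\|_2\le\frac{\beta\cond(A)+\alpha\cond(A_1)}{1-\alpha\cond(A_1)}\|X\|_2\le 2(\beta\cond(A)+\alpha\cond(A_1))\|X\|_2$. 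This is cosmetic, but as described your chain does not deliver the stated constant.

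Second, and more substantively: Weyl gives $\sigma_{\min}(\hat X)\ge\sigma_{\min}(X)-\gamma\sigma_{\max}(X)$, so the quotient is $\cond(X)\,\frac{1+\gamma}{1-\gamma\cond(X)}$, which is \emph{not} bounded by $\cond(X)\,\frac{1+\gamma}{1-\gamma}$ unless $\cond(X)\le 1$; the hypothesis $\gamma\cond(X)<1$ keeps the denominator positive but does not upgrade $1-\gamma\cond(X)$ to $1-\gamma$. Your phrase ``a bound of the asserted shape'' glosses over exactly this point, and your parenthetical rescue via the right-multiplicative structure $X'=X(I+E)^{-1}$ only covers the QR-error part --- the triangular-solve error is a genuinely additive perturbation and is not captured by $\cond(X')\le\frac{1+\|E\|_2}{1-\|E\|_2}\cond(X)$. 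To be fair, the paper's proof commits the identical step (it writes $\frac{\sigma_1(X)+\gamma\|X\|_2}{\sigma_n(X)-\gamma\|X\|_2}\le\cond(X)\frac{1+\gamma}{1-\gamma}$), so the conclusion that this argument actually supports, for both you and the paper, is $\cond(\hat X)\le\frac{1+\gamma}{1-\gamma\cond(X)}\cond(X)$.
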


\begin{proof}
We denote by $Y:= A\hat{R}_1^{-1}$ the exact result of triangular system. Then by the definition of $\beta$-stability, we have
\begin{equation}
\|Y - \hat{X}\|_2 \le \beta \cond(A) \|Y\|_2
\end{equation}
Next, we estimate the term $\|X - Y\|_2$, it follows that 
\begin{equation}
	\begin{split}
\|X - Y\|_2 = \|AR^{-1} - A \hat{R}^{-1} \|_2 \le \|AR^{-1}\|_2 \|I - \hat{R}_1 R_1^{-1}\| \le \alpha \cond(R_1)\|Y\|_2 
	\end{split}
\end{equation}

Since $\cond(R_1) = \cond(A_1)$, then by triangle inequality we have 
\begin{equation}
\|Y\|_2 \le \frac{1}{1 - \alpha \cond(A_1)}\|X\|_2
\end{equation}
Therefore 
\begin{equation}
	\begin{split}
\|X - \hat{X}\|_2 \le~&  \|X - Y\|_2 + \|Y - \hat{X}\|_2  \\  \le~& \beta \cond(A) \|Y\|_2 + \alpha \cond(A_1) \|Y\|_2  \\ \le~&  
\frac{\beta \cond(A) + \alpha \cond(A_1)}{1 - \alpha \cond(A_1)} \|X\|_2  \\
\le~& 2(\beta\cond(A) + \alpha\cond(A_1))\|X\|_2.
	\end{split}
\end{equation}

Hence, by Weyl's theorem \cite[Chapter 8.6]{golub2013matrix}
\begin{equation}
	\cond(\hat{X}) \le \frac{\sigma_1(X) + \gamma \|X\|_2}{\sigma_n(X) - \gamma\|X\|_2} \le \cond(X) \frac{1+\gamma}{1 - \gamma}.
	\end{equation}
\end{proof}

We close this section by discussing the aforementioned stability. Under mild assumptions,
the backward stability for triangular systems (see \cite[Theorem 8.5]{higham2002accuracy}) can tell us that 
$\|\Delta X\|_F \le cm\u \cond(A)\|X\|_F$, and thus it is $\beta$-stable where $\beta \le cm\sqrt{n}\u${ since $\|A\|_{2} \le \|A\|_F \le \sqrt{n}\|A\|_2$.}

The $\alpha$-stability is more involved. We first recall the backward stability of Householder and Givens QR, (see \cite[Chapter 18]{higham2002accuracy}). That means, there exists $\Delta A$, such that $\|\Delta A\|_F \le \varepsilon \|A\|_F$, such that $\hat{R}$ is the $R$ factor of $A + \Delta A$. Here
\begin{itemize}
\item For Householder QR, $\varepsilon \le cln\u$, where $c$ is a small constant.
\item For Givens QR, $\varepsilon \le c(l+n)\u$, where $c$ is a small constant.
\end{itemize}

By expanding the definition of backward stability, we have 
\begin{equation}
QR + \Delta A = \underline{Q} \hat{R},
\end{equation}
or equivalently,
\begin{equation}
	I + Q^{-1}\Delta AR^{-1} = Q^{-1}\underline{Q} \hat{R}R^{-1} = (Q^{-1}\underline{Q})(I+\Delta RR^{-1}).
\end{equation}

By perturbation result in \cite[Theorem 3.1]{chang1997perturbation}, we then have
\begin{equation}
\frac{\|\Delta R R^{-1}\|_F}{\|R\|_2}\le c \|Q^{-1}\Delta AR^{-1}\|_F \le \|\Delta A\|_F\|R^{-1}\|_2.
\end{equation}

Combining backward stability we have
\[  \|\Delta R R^{-1}\|_2 \le \varepsilon \|A\|_F \cond(R), \]
and the common result on $\varepsilon$ has been discussed before.

% \section*{Acknowledgments}
% The authors would like to thank...

\bibliographystyle{siamplain}
\bibliography{ref}

\begin{thebibliography}{10}

\bibitem{anderson2011communication}
{\sc M.~Anderson, G.~Ballard, J.~Demmel, and K.~Keutzer}, {\em
  Communication-avoiding {QR} decomposition for {GPUs}}, in 2011 IEEE
  International Parallel \& Distributed Processing Symposium, IEEE, 2011,
  pp.~48--58.

\bibitem{ballard2011minimizing}
{\sc G.~Ballard, J.~Demmel, O.~Holtz, and O.~Schwartz}, {\em Minimizing
  communication in numerical linear algebra}, SIAM Journal on Matrix Analysis
  and Applications, 32 (2011), pp.~866--901.

\bibitem{bischof1987wy}
{\sc C.~Bischof and C.~Van~Loan}, {\em The {WY} representation for products of
  {H}ouseholder matrices}, SIAM Journal on Scientific and Statistical
  Computing, 8 (1987), pp.~s2--s13.

\bibitem{bischof1991parallel}
{\sc C.~H. Bischof}, {\em A parallel {QR} factorization algorithm with
  controlled local pivoting}, SIAM Journal on Scientific and Statistical
  Computing, 12 (1991), pp.~36--57.

\bibitem{chang1997perturbation}
{\sc X.-W. Chang, C.~C. Paige, and G.~Stewart}, {\em Perturbation analyses for
  the {QR} factorization}, SIAM Journal on Matrix Analysis and Applications, 18
  (1997), pp.~775--791.

\bibitem{constantine2011tall}
{\sc P.~G. Constantine and D.~F. Gleich}, {\em Tall and skinny {QR}
  factorizations in {MapReduce} architectures}, in Proceedings of the second
  international workshop on MapReduce and its applications, 2011, pp.~43--50.

\bibitem{demmel2013communication}
{\sc J.~Demmel}, {\em Communication-avoiding algorithms for linear algebra and
  beyond.}, in IPDPS, 2013, p.~585.

\bibitem{duersch2018robust}
{\sc J.~A. Duersch, M.~Shao, C.~Yang, and M.~Gu}, {\em A robust and efficient
  implementation of {LOBPCG}}, SIAM Journal on Scientific Computing, 40 (2018),
  pp.~C655--C676.

\bibitem{fukaya2020shifted}
{\sc T.~Fukaya, R.~Kannan, Y.~Nakatsukasa, Y.~Yamamoto, and Y.~Yanagisawa},
  {\em Shifted {Cholesky QR} for computing the {QR} factorization of
  ill-conditioned matrices}, SIAM Journal on Scientific Computing, 42 (2020),
  pp.~A477--A503.

\bibitem{fukaya2014choleskyqr2}
{\sc T.~Fukaya, Y.~Nakatsukasa, Y.~Yanagisawa, and Y.~Yamamoto}, {\em
  {CholeskyQR2}: a simple and communication-avoiding algorithm for computing a
  tall-skinny {QR} factorization on a large-scale parallel system}, in 2014 5th
  workshop on latest advances in scalable algorithms for large-scale systems,
  IEEE, 2014, pp.~31--38.

\bibitem{gittens2013topics}
{\sc A.~Gittens}, {\em Topics in randomized numerical linear algebra},
  California Institute of Technology, 2013.

\bibitem{golub2013matrix}
{\sc G.~H. Golub and C.~F. Van~Loan}, {\em Matrix computations}, vol.~3, JHU
  press, 2013.

\bibitem{granat2010novel}
{\sc R.~Granat, B.~K{\aa}gstr{\"o}m, and D.~Kressner}, {\em A novel parallel
  {QR} algorithm for hybrid distributed memory {HPC}systems}, SIAM Journal on
  Scientific Computing, 32 (2010), pp.~2345--2378.

\bibitem{halko2011finding}
{\sc N.~Halko, P.-G. Martinsson, and J.~A. Tropp}, {\em Finding structure with
  randomness: Probabilistic algorithms for constructing approximate matrix
  decompositions}, SIAM review, 53 (2011), pp.~217--288.

\bibitem{higham2002accuracy}
{\sc N.~J. Higham}, {\em Accuracy and stability of numerical algorithms}, SIAM,
  2002.

\bibitem{hoemmen2010communication}
{\sc M.~Hoemmen}, {\em Communication-avoiding Krylov subspace methods},
  University of California, Berkeley, 2010.

\bibitem{householder1958unitary}
{\sc A.~S. Householder}, {\em Unitary triangularization of a nonsymmetric
  matrix}, Journal of the ACM (JACM), 5 (1958), pp.~339--342.

\bibitem{kannan2017randomized}
{\sc R.~Kannan and S.~Vempala}, {\em Randomized algorithms in numerical linear
  algebra}, Acta Numerica, 26 (2017), p.~95.

\bibitem{martinsson2020randomized}
{\sc P.-G. Martinsson and J.~Tropp}, {\em Randomized numerical linear algebra:
  Foundations \& algorithms}, arXiv preprint arXiv:2002.01387,  (2020).

\bibitem{musco2015randomized}
{\sc C.~Musco and C.~Musco}, {\em Randomized block krylov methods for stronger
  and faster approximate singular value decomposition}, arXiv preprint
  arXiv:1504.05477,  (2015).

\bibitem{o1990parallel}
{\sc D.~P. O'Leary and P.~Whitman}, {\em Parallel {QR} factorization by
  {H}ouseholder and modified {G}ram--{S}chmidt algorithms}, Parallel computing,
  16 (1990), pp.~99--112.

\bibitem{schreiber1989storage}
{\sc R.~Schreiber and C.~Van~Loan}, {\em A storage-efficient {WY}
  representation for products of {H}ouseholder transformations}, SIAM Journal
  on Scientific and Statistical Computing, 10 (1989), pp.~53--57.

\bibitem{terao2020lu}
{\sc T.~Terao, K.~Ozaki, and T.~Ogita}, {\em {LU-Cholesky} {QR} algorithms for
  thin {QR} decomposition}, Parallel Computing, 92 (2020), p.~102571.

\bibitem{tropp2015introduction}
{\sc J.~A. Tropp}, {\em An introduction to matrix concentration inequalities},
  Foundations and Trends in Machine Learning, 8 (2015), pp.~1--230.

\bibitem{vershynin2018high}
{\sc R.~Vershynin}, {\em High-dimensional probability: An introduction with
  applications in data science}, vol.~47, Cambridge university press, 2018.

\bibitem{wainwright2019high}
{\sc M.~J. Wainwright}, {\em High-dimensional statistics: A non-asymptotic
  viewpoint}, vol.~48, Cambridge University Press, 2019.

\bibitem{yamamoto2015roundoff}
{\sc Y.~Yamamoto, Y.~Nakatsukasa, Y.~Yanagisawa, and T.~Fukaya}, {\em Roundoff
  error analysis of the {CholeskyQR2} algorithm}, Electron. Trans. Numer. Anal,
  44 (2015).

\bibitem{yamamoto2016roundoff}
{\sc Y.~Yamamoto, Y.~Nakatsukasa, Y.~Yanagisawa, and T.~Fukaya}, {\em Roundoff
  error analysis of the {CholeskyQR2} algorithm in an oblique inner product},
  JSIAM Letters, 8 (2016), pp.~5--8.

\end{thebibliography}
\end{document}